%


\documentclass[10pt, reqno]{amsart}


\def\BibTeX{{\rm B\kern-.05em{\sc i\kern-.025em b}\kern-.08em
    T\kern-.1667em\lower.7ex\hbox{E}\kern-.125emX}}

\hfuzz1pc 

\newtheorem{thm}{Theorem}[section]

\newtheorem{lem}[thm]{Lemma}
\newtheorem{prop}[thm]{Proposition}


\theoremstyle{definition}

\theoremstyle{remark}
\newtheorem{rem}{Remark}[section]


\numberwithin{equation}{section}


    \newcommand{\floor}[1]{\lfloor#1\rfloor}

    \newcommand{\EE}{\mathbb{E}}

    \renewcommand{\Pr}{\operatorname{P}}

    \newcommand{\dto}{\xrightarrow{d}}
    
    \newcommand{\vto}{\xrightarrow{v}}

    \newcommand{\toi}{\to\infty}

    \newcommand{\eind}{\stackrel{d}{=}}
    \newcommand{\rmd}{\mathrm{d}}

\newcommand{\be}{\begin{equation}}
    \newcommand{\ee}{\end{equation}}

\begin{document}

\title[Functional convergence for moving averages] 
{Functional convergence for moving averages with heavy tails and random coefficients}

%
\author{Danijel Krizmani\'{c}}

\address{Danijel Krizmani\'{c}\\ Department of Mathematics\\
        University of Rijeka\\
        Radmile Matej\v{c}i\'{c} 2, 51000 Rijeka\\
        Croatia}
\email{dkrizmanic@math.uniri.hr}



\subjclass[2010]{Primary 60F17; Secondary 60G52}
\keywords{Functional limit theorem, Regular variation, $M_{2}$ topology, Moving Average Process}


\begin{abstract}
We study functional convergence of sums of moving averages with random coefficients and heavy-tailed innovations. Under some standard moment conditions and the assumption that all partial sums of the series of coefficients are a.s.~bounded between zero and the sum of the series we obtain functional convergence of the corresponding partial sum stochastic process in the space $D[0,1]$ of c\`{a}dl\`{a}g functions with the Skorohod $M_{2}$ topology.
\end{abstract}

\maketitle

\section{Introduction}
\label{intro}

Let $(Z_{i})_{i \in \mathbb{Z}}$ be a sequence of i.i.d.~regularly varying random variables with index of regular variation $\alpha \in (0,2)$.
This means that
\begin{equation}\label{e:regvar}
 \Pr(|Z_{i}| > x) = x^{-\alpha} L(x), \qquad x>0,
\end{equation}
where $L$ is a slowly varying function at $\infty$. Regular variation implies $\mathrm{E}|Z_{i}|^{\beta} < \infty$ for every $\beta \in (0,\alpha)$.
We study
the moving average process with random coefficients, defined by
\begin{equation}\label{e:MArandom}
X_{i} = \sum_{j=0}^{\infty}C_{j}Z_{i-j}, \qquad i \in \mathbb{Z},
\end{equation}
where $(C_{i})_{i \geq 0 }$ is a sequence of random variables independent of $(Z_{i})$, such that the series in (\ref{e:MArandom}) is a.s.~convergent. One sufficient condition for that is
\begin{equation}\label{e:asconv}
\sum_{j=0}^{\infty}|C_{j}|^{\alpha - \epsilon} < \infty \quad \textrm{a.s. for some} \ \epsilon >0
\end{equation}
(see Hult and Samorodnitsky~\cite{HuSa08}).
We will use the following moment condition on the sequence $(C_{j})$:
\begin{equation}\label{e:momcond}
\sum_{j=0}^{\infty} \mathrm{E} |C_{j}|^{\delta} < \infty \qquad \textrm{for some}  \ \delta < \alpha,\,0 < \delta \leq 1.
\end{equation}
This condition also implies the a.s.~convergence of the series in (\ref{e:MArandom}), since
$$ \mathrm{E}|X_{i}|^{\delta} \leq \sum_{j=0}^{\infty} \mathrm{E}|C_{j}|^{\delta} \mathrm{E}|Z_{i-j}|^{\delta} = \mathrm{E}|Z_{1}|^{\delta} \sum_{j=0}^{\infty}\mathrm{E}|C_{j}|^{\delta} < \infty.$$
Beside condition (\ref{e:momcond}) we will require some other moment conditions, which will be specified in Section~\ref{S:InfiniteMA}. We also impose the following (usual) regularity conditions on $Z_{1}$:
  \begin{eqnarray}\label{e:oceknula}
    \mathrm{E} Z_{1}=0, & & \textrm{if} \ \ \alpha \in (1,2),  \\
    Z_{1} \ \textrm{is symmetric}, & & \textrm{if} \ \ \alpha=1.\label{e:sim}
  \end{eqnarray}
Let $(a_{n})$ be a sequence of positive real numbers such that
\be\label{eq:niz}
n \Pr (|Z_{1}|>a_{n}) \to 1,
\ee
as $n \to \infty$. Regular
variation of $Z_{i}$ can be expressed in terms of
vague convergence of measures on $\EE = \overline{\mathbb{R}} \setminus \{0\}$: for $a_n$ as in
\eqref{eq:niz} and as $n \to \infty$,
\begin{equation}
  \label{eq:onedimregvar}
  n \Pr( a_n^{-1} Z_i \in \cdot \, ) \vto \mu( \, \cdot \,),
\end{equation}
with the measure $\mu$ on $\EE$  given by
\begin{equation}
\label{eq:mu}
  \mu(\rmd x) = \bigl( p \, 1_{(0, \infty)}(x) + r \, 1_{(-\infty, 0)}(x) \bigr) \, \alpha |x|^{-\alpha-1} \, \rmd x,
\end{equation}
where
\be\label{eq:pq}
p =   \lim_{x \to \infty} \frac{\Pr(Z_i > x)}{\Pr(|Z_i| > x)} \qquad \textrm{and} \qquad
  r =   \lim_{x \to \infty} \frac{\Pr(Z_i \leq -x)}{\Pr(|Z_i| > x)}.
\ee
When the coefficients $C_{i}$ are deterministic, Basrak and Krizmani\'{c}~\cite{BaKr} obtained functional convergence of the partial sum process of $X_{i}$'s with respect to the Skorohod $M_{2}$ topology on $D[0,1]$. More precisely, they showed that under the condition on the coefficients $C_{i}$:
\be\label{eq:InfiniteMAcond}
0 \le \sum_{i=0}^{s}C_{i} \Bigg/ \sum_{i=0}^{\infty}C_{i} \le 1, \qquad \textrm{for every} \ s=0, 1, 2 \ldots,
\ee
 the following
\be\label{e:BaKr}
\frac{1}{a_{n}}\sum_{i=1}^{\floor{n\,\cdot}} X_{i}  \dto \bigg( \sum_{j=0}^{\infty}C_{j} \bigg) V(\,\cdot\,),
\ee
holds in $D[0,1]$, where $V(\,\cdot\,)$ is an $\alpha$--stable L\'{e}vy process and $D[0,1]$
is the space of real--valued right continuous functions on $[0,1]$ with left limits.

Recall here that if at least two coefficients are nonzero, then the convergence in (\ref{e:BaKr}) cannot hold with respect to the more usual Skorohod $J_{1}$ topology on $D[0,1]$, but
 if all the coefficients are nonnegative, then the convergence in (\ref{e:BaKr}) holds in the $M_{1}$ topology, see
 Avram and Taqqu~\cite{AvTa92}.
 The aim of this article is to obtain the functional convergence with respect to the $M_{2}$ topology as in (\ref{e:BaKr}) when the coefficients $C_{i}$ are random variables. Limit theory for moving averages with random coefficients, but without the time component, have already been studied, see Kulik~\cite{Ku06}. These processes can represent various stochastic models, such are solutions to stochastic recurrence equations and stochastic integrals (usually with some predictability assumption instead of the independence between the coefficients $C_{j}$ and the noise variables $Z_{j}$, see Hult and Samorodnitsky~\cite{HuSa08}).
 
 The Skorohod $M_{2}$ topology on $D[0, 1]$ is defined using completed graphs and their parametric representations (see Section 12.11 in Whitt~\cite{Whitt02} for details). Here we give only a characterization of the $M_{2}$ topology using the Hausdorff metric on the spaces of graphs, since it will be convenient for our purposes. For $x_{1},x_{2} \in D[0,1]$ define
$$ d_{M_{2}}(x_{1}, x_{2}) = \bigg(\sup_{a \in \Gamma_{x_{1}}} \inf_{b \in \Gamma_{x_{2}}} d(a,b) \bigg) \vee \bigg(\sup_{a \in \Gamma_{x_{2}}} \inf_{b \in \Gamma_{x_{1}}} d(a,b) \bigg),$$
where $d$ is the metric on $\mathbb{R}^{2}$ defined by $d((x_{1},y_{1}),(x_{2},y_{2}))=|x_{1}-x_{2}| \vee |y_{1}-y_{2}|$ for $(x_{i},y_{i}) \in \mathbb{R}^{2},\,i=1,2$, where $a \vee b = \max\{a,b\}$.
The metric $d_{M_{2}}$ induces the $M_{2}$ topology. This topology is weaker than the more frequently used $M_{1}$ and $J_{1}$ topologies.

 The paper is organized as follows.
  In Section~\ref{S:FiniteMA} we obtain functional convergence for finite order moving average processes, and then in Section~\ref{S:InfiniteMA} we
extend this result to infinite order moving averages. A technical result needed for establishing functional convergence for infinite order moving averages when $\alpha \in [1,2)$ is given in Appendix.

\section{Finite order MA processes}
\label{S:FiniteMA}

 Let $C_{0}, C_{1}, \ldots , C_{q}$ (for some fixed $q \in \mathbb{N}$) be random variables satisfying
\be\label{eq:FiniteMAcond}
0 \le \sum_{i=0}^{s}C_{i} \Bigg/ \sum_{i=0}^{q}C_{i} \le 1 \ \ \textrm{a.s.} \qquad \textrm{for every} \ s=0, 1, \ldots, q.
\ee
Put $ C = \sum_{i=0}^{q}C_{i}$.
Observe that condition (\ref{eq:FiniteMAcond}) implies that $C$,
$ \sum_{i=0}^{s}C_{i}$ and $ \sum_{i=s}^{q}C_{i}$ are a.s.~of the same sign for every $ s=0,1,\ldots,q$. Also note that condition (\ref{eq:FiniteMAcond}) is satisfied if the $C_{j}$' are all nonnegative or all nonpositive.


Let $(X_{t})$ be a moving average process defined by
$$ X_{t} = \sum_{i=0}^{q}C_{i}Z_{t-i}, \qquad t \in \mathbb{Z},$$
and let the corresponding partial sum process be
\be\label{eq:defVn}
V_{n}(t) = \frac{1}{a_{n}}  \sum_{i=1}^{\floor {nt}}X_{i}, \qquad t \in [0,1],
\ee
where the normalizing sequence $(a_n)$ satisfies~\eqref{eq:niz}. Let $B(t) = C$ for $t \in [0,1]$.

\begin{thm}\label{t:FinMA}
Let $(Z_{i})_{i \in \mathbb{Z}}$ be an i.i.d.~sequence of regularly varying random variables with index $\alpha \in (0,2)$, such that $(\ref{e:oceknula})$ and $(\ref{e:sim})$ hold.
Assume $C_{0}, C_{1}, \ldots , C_{q}$ are random variables, independent of $(Z_{i})$, that satisfy
$(\ref{eq:FiniteMAcond})$. Then
$$ V_{n}(\,\cdot\,) \dto \widetilde{B}(\,\cdot\,) V(\,\cdot\,), \qquad n \to \infty,$$
in $D[0,1]$ endowed with the $M_{2}$ topology, where $V$ is an $\alpha$--stable L\'{e}vy process with characteristic triple $(0, \mu, b)$, with $\mu$ as in $(\ref{eq:mu})$ and
$$ b = \left\{ \begin{array}{cc}
                                   0, & \quad \alpha = 1,\\[0.4em]
                                   (p-r)\frac{\alpha}{1-\alpha}, & \quad \alpha \in (0,1) \cup (1,2),
                                 \end{array}\right.$$
and $\widetilde{B}$ is a random element in $D[0,1]$, independent of $V$, such that $\widetilde{B} \eind B$.
\end{thm}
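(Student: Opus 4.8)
The plan is to reduce the random-coefficient case to the known deterministic-coefficient result by conditioning on the coefficient vector $\boldsymbol{C} = (C_0, \ldots, C_q)$, and then to handle the limit as an almost-sure construction. Since $(C_i)$ and $(Z_i)$ are independent, conditionally on $\boldsymbol{C} = \boldsymbol{c}$ with $\boldsymbol{c}$ in the support (so that $\boldsymbol{c}$ satisfies the deterministic condition \eqref{eq:InfiniteMAcond} in its finite form), the process $X_t = \sum_{i=0}^q c_i Z_{t-i}$ is exactly a deterministic-coefficient finite-order moving average, and the result of Basrak and Krizmani\'{c}~\cite{BaKr} (the finite-order analogue of \eqref{e:BaKr}) gives that $V_n(\,\cdot\,) \dto \left(\sum_{i=0}^q c_i\right) V(\,\cdot\,)$ in $(D[0,1], M_2)$, where $V$ is the $\alpha$-stable L\'{e}vy process with the stated triple. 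The point is that the limit law depends on $\boldsymbol{c}$ only through the scalar $\sum_{i=0}^q c_i$, and $V$ itself does not depend on $\boldsymbol{c}$ at all.

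First I would make the conditional convergence statement precise: define, for each bounded continuous $f : D[0,1] \to \mathbb{R}$ (with $D[0,1]$ carrying $M_2$), the function $\boldsymbol{c} \mapsto \E[f(c V(\,\cdot\,))]$ where $c = \sum_i c_i$, and show it is measurable (continuity in $c$ of $c \mapsto \E f(cV)$ follows from continuity of scalar multiplication on $(D[0,1],M_2)$ together with bounded convergence). Then I would verify that $\E[f(V_n) \mid \boldsymbol{C} = \boldsymbol{c}] \to \E[f(cV)]$ for $\mu_{\boldsymbol{C}}$-almost every $\boldsymbol{c}$; this is just the deterministic theorem applied pointwise, valid because \eqref{eq:FiniteMAcond} holds a.s.~so almost every realization $\boldsymbol{c}$ is admissible. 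Integrating against the law of $\boldsymbol{C}$ and using dominated convergence (the integrand is bounded by $\|f\|_\infty$) yields
$$ \E[f(V_n)] = \E\big[ \E[f(V_n)\mid \boldsymbol{C}] \big] \longrightarrow \E\big[ \E[f(C V) \mid \boldsymbol{C}] \big] = \E[f(\widetilde{B} V)], $$
where $\widetilde{B}$ is the constant function equal to $C = \sum_{i=0}^q C_i$, realized on a product space independent of $V$, so that $\widetilde{B} \eind B$ and $\widetilde{B} \perp V$. Since $f$ was an arbitrary bounded continuous functional, this is precisely convergence in distribution in $(D[0,1], M_2)$.

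The main obstacle is the measure-theoretic bookkeeping around conditioning: one must ensure that $D[0,1]$ with the $M_2$ topology is separable and metrizable (it is, via $d_{M_2}$, so it is a nice enough space for regular conditional distributions and for the portmanteau characterization of weak convergence via bounded continuous functions), and that the map $(\boldsymbol{c}, \omega) \mapsto V_n(\omega)$ built from $Z$'s composed with deterministic coefficients is jointly measurable so that the conditional expectations are well defined and the disintegration $\E[f(V_n)] = \int \E[f(V_n)\mid \boldsymbol{C}=\boldsymbol{c}]\,\mu_{\boldsymbol{C}}(d\boldsymbol{c})$ is legitimate. A secondary point worth spelling out is the continuity of $c \mapsto cV$ as a map $\mathbb{R} \to (D[0,1],M_2)$ and of scalar multiplication generally, which is needed both for measurability of the limiting integrand and to identify the limit; this is elementary since multiplying a c\`{a}dl\`{a}g function by a scalar scales its completed graph by the same factor in the second coordinate, and $d_{M_2}$ behaves well under such scaling. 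Once these routine facts are in place the argument is essentially a one-line conditioning reduction to~\cite{BaKr}.
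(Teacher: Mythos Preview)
Your conditioning argument is correct and is a genuinely different route from the paper's proof. The paper does \emph{not} reduce to the deterministic result of \cite{BaKr} by conditioning; instead it essentially reproves that result with random coefficients in place, establishing $CV_n^{Z}(\,\cdot\,) \dto \widetilde{B}(\,\cdot\,)V(\,\cdot\,)$ via joint convergence and the continuous mapping theorem, and then showing $d_{M_2}(CV_n^{Z},V_n)\to 0$ in probability by a direct Hausdorff-metric analysis of the completed graphs (Lemma~\ref{l:first} plus a case-by-case study of the events $\{Y_n>\epsilon\}$ and $\{T_n>\epsilon\}$, handling the randomness of the $C_i$ by intersecting with $\{C_*\le M\}$ and letting $M\to\infty$). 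Your approach is shorter and more conceptual: Fubini on the product space, the deterministic theorem applied pointwise in $\boldsymbol{c}$, and dominated convergence.

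What the paper's longer approach buys is quantitative control that is reused later. In the proof of Theorem~\ref{t:InfMA} for $\alpha\in[1,2)$, the finite order $q$ is replaced by $q_n=\lfloor n^{1/10}\rfloor$ growing with $n$, and the paper explicitly revisits the estimates from the proof of Theorem~\ref{t:FinMA} (relations (\ref{eq:setBn1}), (\ref{eq:setBn3}), (\ref{eq:Tnend})) to check they still vanish when $q=q_n$. Your black-box conditioning gives no such uniformity in $q$, so while it proves Theorem~\ref{t:FinMA} cleanly, it would not by itself supply what Section~\ref{S:InfiniteMA} needs in the $\alpha\ge 1$ case.
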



As in Basrak and Krizmani\'{c}~\cite{BaKr} one can prove the following lemma.

\begin{lem}\label{l:first}
\begin{itemize}
  \item[(i)] For $k < q$ it holds
  \begin{eqnarray*}
  \sum_{i=1}^{k}\frac{C\,Z_{i}}{a_{n}} - \sum_{i=1}^{k}\frac{X_{i}}{a_{n}} & = & \sum_{u=0}^{k-1}\frac{Z_{k-u}}{a_{n}} \sum_{s=u+1}^{q}C_{s} - \sum_{u=k-q}^{q-1}\frac{Z_{-u}}{a_{n}} \sum_{s=u+1}^{q}C_{s}\\[0.6em]
  & & - \sum_{u=0}^{q-k-1}\frac{Z_{-u}}{a_{n}} \sum_{s=u+1}^{u+k}C_{s}.
  \end{eqnarray*}
  \item[(ii)] For $k \ge q$ it holds
  \begin{eqnarray*}
  \sum_{i=1}^{k}\frac{C\,Z_{i}}{a_{n}} - \sum_{i=1}^{k}\frac{X_{i}}{a_{n}} & = & \sum_{u=0}^{q-1}\frac{Z_{k-u}}{a_{n}} \sum_{s=u+1}^{q}C_{s} - \sum_{u=0}^{q-1}\frac{Z_{-u}}{a_{n}} \sum_{s=u+1}^{q}C_{s}\\[0.7em]
  & =: & H_{n}(k) - G_{n}.
  \end{eqnarray*}
  \item[(iii)] For $q \le k \le n-q$ it holds
  \begin{eqnarray*}
  \sum_{i=1}^{k}\frac{C\,Z_{i}}{a_{n}} - \sum_{i=1}^{k+q}\frac{X_{i}}{a_{n}} & = & - \sum_{u=0}^{q-1}\frac{Z_{-u}}{a_{n}} \sum_{s=u+1}^{q}C_{s} - \sum_{u=1}^{q}\frac{Z_{k+u}}{a_{n}} \sum_{s=0}^{q-u}C_{s}\\[0.7em]
  & =: & -G_{n} - T_{n}(k).
  \end{eqnarray*}
\end{itemize}
\end{lem}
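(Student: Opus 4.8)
The plan is to establish all three identities by the same elementary device, applied with different summation ranges. The starting point is the pointwise telescoping identity
\[
CZ_i - X_i = \sum_{j=0}^q C_j\,(Z_i - Z_{i-j}),
\]
valid because $C = \sum_{j=0}^q C_j$ while $X_i = \sum_{j=0}^q C_j Z_{i-j}$. Summing over $i$ from $1$ to some upper limit $N$ and using that the $C_j$ are constant in $i$, I would interchange the order of summation to get $\sum_{i=1}^N(CZ_i - X_i) = \sum_{j=0}^q C_j\sum_{i=1}^N(Z_i - Z_{i-j})$. The inner quantity is a difference of two blocks of consecutive innovations, $\sum_{i=1}^N Z_i - \sum_{i=1-j}^{N-j}Z_i$, and the whole computation reduces to understanding how these two blocks overlap as the lag $j$ ranges over $0,\ldots,q$; the overlap pattern is exactly what distinguishes the three cases.

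Case (ii) is the cleanest: with $N = k$ and $k \ge q$, every lag satisfies $j \le q \le k$, the two blocks overlap in the indices $1,\ldots,k-j$, and after cancellation the inner sum is the front block minus the back block, $\sum_{i=k-j+1}^k Z_i - \sum_{i=1-j}^0 Z_i$. Reindexing by $u$ (via $i = k-u$ and $i = -u$) turns this into $\sum_{u=0}^{j-1}(Z_{k-u}-Z_{-u})$; substituting back and interchanging summation once more makes the coefficient of each surviving innovation the tail sum $\sum_{s=u+1}^q C_s$, which is precisely the asserted form, and dividing by $a_n$ finishes the case. Case (i) uses $N = k$ with $k < q$: the lags $j \le k$ are handled exactly as above, but for the lags $k < j \le q$ the back block $\sum_{i=1-j}^{k-j}Z_i$ now lies entirely among the nonpositive indices and no longer meets the front block $\sum_{i=1}^k Z_i$. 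Keeping the two families of lags apart and swapping the order of summation, the front blocks recombine into $\sum_{u=0}^{k-1}Z_{k-u}\sum_{s=u+1}^q C_s$, while the back blocks split according to whether the coefficient index hits the upper truncation at $q$, yielding the two remaining sums whose inner coefficients are the tail sum $\sum_{s=u+1}^q C_s$ on one index range and the shorter partial sum $\sum_{s=u+1}^{u+k}C_s$ on the complementary range.

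Case (iii) is the same telescoping run with the larger upper limit $N = k+q$, with one preliminary adjustment. Since the left-hand side subtracts $\sum_{i=1}^{k+q}X_i$ but only $\sum_{i=1}^k CZ_i$, I would first write it as $\sum_{i=1}^{k+q}(CZ_i - X_i) - C\sum_{i=k+1}^{k+q}Z_i$. The hypothesis $q \le k$ guarantees that every lag again overlaps fully, so the telescoping produces the back block $-\sum_{u=0}^{q-1}Z_{-u}\sum_{s=u+1}^q C_s$ together with a front block now located at the future indices $k+1,\ldots,k+q$. Combining that front block with the peeled-off tail $-C\sum_{i=k+1}^{k+q}Z_i$ and reindexing by $u$ collapses the coefficient of $Z_{k+u}$ to the head partial sum $-\sum_{s=0}^{q-u}C_s$, giving the second asserted sum.

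In every case the only genuine work is the bookkeeping of index ranges when the order of summation is interchanged and the blocks are reindexed; this is elementary but sign- and off-by-one-prone, and it is the step I would carry out most carefully. An equivalent and arguably safer route avoids the telescoping altogether: expand both $\sum_i CZ_i$ and $\sum_i X_i$ directly as linear combinations $\sum_m(\,\cdot\,)\,Z_m$ and read off the coefficient of each innovation $Z_m$. That coefficient is always a partial sum of the sequence $(C_s)$ truncated below at $0$ and above at $q$ (for instance, in cases (i)--(ii) it equals $C - \sum_{s=0}^{\min(q,\,k-m)}C_s$ for an index $1\le m\le k$ and $-\sum_{s=1-m}^{\min(q,\,k-m)}C_s$ for $m\le 0$), and the three displayed identities correspond exactly to the three regimes in which these two truncations become active. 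I would use whichever viewpoint keeps the index arithmetic cleanest.
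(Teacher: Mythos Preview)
Your approach is correct. The paper itself does not give a proof of this lemma; it simply states that ``As in Basrak and Krizmani\'{c}~\cite{BaKr} one can prove the following lemma,'' deferring to the earlier article where the deterministic-coefficient version is established. The identity is purely algebraic (the randomness of the $C_j$ plays no role), so the argument in \cite{BaKr} carries over verbatim, and your telescoping computation---starting from $CZ_i - X_i = \sum_{j=0}^q C_j(Z_i - Z_{i-j})$, summing in $i$, cancelling the overlap of the shifted blocks, and then swapping the order of summation to read off tail sums of the $C_s$ as coefficients---is exactly the kind of manipulation that proof amounts to. Your treatment of case~(iii), where you first peel off $C\sum_{i=k+1}^{k+q}Z_i$ so as to reduce to the case~(ii) telescoping with upper limit $k+q$, and then reindex the front block via $v=q-u$ to recognise the coefficient of $Z_{k+u}$ as $-\sum_{s=0}^{q-u}C_s$, is clean and correct. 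The alternative route you mention (expand both sides as $\sum_m(\cdot)Z_m$ and match coefficients) is equivalent and, as you say, sometimes easier to audit for off-by-one errors; either is acceptable here.
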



\begin{proof} ({\it Theorem~\ref{t:FinMA}})
Since the random variables $Z_{i}$ are i.i.d.~and regularly varying, it is known that
\begin{equation*}
  \sum_{i=1}^{\floor {nt} }\frac{Z_{i}}{a_{n}} - \floor{nt} \mathrm{E} \Big(\frac{Z_{1}}{a_{n}} 1_{\{|Z_{1}| \leq a_{n}\}} \Big), \qquad t \in [0,1],
\end{equation*}
converges in distribution, as $n \to \infty$, in $D[0,1]$ with the $M_{1}$ topology to an $\alpha$--stable L\'{e}vy process with characteristic triple $(0,\mu,0)$ (see Theorem 3.4 in Basrak et al.~\cite{BKS}). By Karamata's theorem, as $n \to \infty$,
\begin{eqnarray*}
  n\,\mathrm{E} \Big( \frac{Z_{1}}{a_{n}} 1_{\{ |Z_{1}| \leq a_{n} \}} \Big) \to (p-r)\frac{\alpha}{1-\alpha}, && \textrm{if}  \ \ \alpha < 1,\\[0.5em]
  n\,\mathrm{E} \Big( \frac{Z_{1}}{a_{n}} 1_{\{ |Z_{1}| > a_{n} \}} \Big) \to (p-r)\frac{\alpha}{\alpha-1}, && \textrm{if} \ \ \alpha >1,
\end{eqnarray*}
with $p$ and $r$ as in (\ref{eq:pq}). Therefore conditions (\ref{e:oceknula}) and (\ref{e:sim}), Corollary 12.7.1 in Whitt~\cite{Whitt02} (which gives a sufficient condition for addition to be continuous in the $M_{1}$ topology) and the continuous mapping theorem yield that
$V_{n}^{Z}(\,\cdot\,) \dto V(\,\cdot\,)$, as $n \to \infty$, in $D[0,1]$ with the $M_{1}$ topology, where
$$ V_{n}^{Z}(t) := \sum_{i=1}^{\floor {nt}}\frac{Z_{i}}{a_{n}}, \qquad t \in [0,1],$$
and
 $V$ is an $\alpha$--stable L\'{e}vy process
 with characteristic triple
$(0,\mu,0)$ if $\alpha=1$ and $(0,\mu,(p-r)\alpha/(1-\alpha))$ if $\alpha \in (0,1) \cup (1,2)$.

It is well known that the space $D[0,1]$ equipped with the Skorohod $J_{1}$ topology is a Polish space (i.e.~metrizable as a complete separable metric space), see Billingsley~\cite{Bi68}, Section 14. The same holds for the $M_{1}$ topology, since it is topologically complete (see Whitt~\cite{Whitt02}, Section 12.8) and separability remains preserved in the weaker topology. Therefore by Corolarry 5.18 in Kallenberg~\cite{Ka97}, we can find a random element $\widetilde{B}$ in $D[0,1]$, independent of $V$, such that $\widetilde{B} \eind B$. This and the fact that $C$ is independent of $V_{n}^{Z}$, by an application of Theorem 3.29 in Kallenberg~\cite{Ka97}, imply
  \begin{equation}\label{e:zajedkonvK}
   (B(\,\cdot\,), V_{n}^{Z}(\,\cdot\,)) \dto (\widetilde{B}(\,\cdot\,), V(\,\cdot\,)), \qquad \textrm{as} \ n \to \infty,
  \end{equation}
  in $D([0,1], \mathbb{R}^{2})$ with the product $M_{1}$ topology.

Let $g \colon D([0,1], \mathbb{R}^{2}) \to D[0,1]$ be a function defined by
$$ g(x) = x_{1}x_{2}, \qquad x=(x_{1},x_{2}) \in D([0,1], \mathbb{R}^{2}),$$
where $(x_{1}x_{2})(t) = x_{1}(t) x_{2}(t)$ for $t \in [0,1]$. Let
$$D_{1} = \{ u \in D([0,1] : \textrm{Disc}(u) = \emptyset \},$$
and
$$ D_{2} = \{ (u,v) \in D([0,1], \mathbb{R}^{2}) : \textrm{Disc}(u) = \emptyset \},$$
where $\textrm{Disc}(u)$ is the set of discontinuity points of $u$. Then by Theorem 13.3.2 in Whitt~\cite{Whitt02} the function $g$ is continuous on the set $D_{2}$ (with the Skorohod $M_{1}$ topology on $D[0,1]$ and product $M_{1}$ topology on $D([0,1], \mathbb{R}^{2})$). Hence $\textrm{Disc}(g) \subseteq D_{2}^{c}$, and
$$ \Pr[ (\widetilde{B}, V) \in \textrm{Disc}(g) ] \leq \Pr [ (\widetilde{B}, V)  \in D_{2}^{c}] \leq \Pr ( \widetilde{B} \in D_{1}^{c}) =\Pr ( B \in D_{1}^{c})=0 .$$
 This allows us to apply the continuous mapping theorem (see for instance Theorem 3.1 in Resnick~\cite{Resnick07}) to relation (\ref{e:zajedkonvK}) which yields
$g( B, V_{n}^{Z}) \dto g( \widetilde{B}, V)$, i.e.
$$ C V_{n}^{Z}(\,\cdot\,) \dto \widetilde{B}(\,\cdot\,) V(\,\cdot\,), \qquad \textrm{as} \ n \to \infty,$$ in $D[0,1]$ with the $M_{1}$ topology.
Using the fact that $M_{1}$ convergence implies $M_{2}$ convergence, we obtain
\be
CV_{n}^{Z}(\,\cdot\,) \dto \widetilde{B}(\,\cdot\,) V(\,\cdot\,), \qquad \textrm{as} \ n \to \infty,
\ee
in $(D[0,1], d_{M_{2}})$ as well. If we can show that for every $\epsilon >0$
$$ \lim_{n \to \infty}\Pr[d_{M_{2}}(CV_{n}^{Z}, V_{n})> \epsilon]=0,$$
an application of  Slutsky's theorem (see for instance Theorem 3.4 in Resnick~\cite{Resnick07})
will imply
$V_{n}(\,\cdot\,) \dto \widetilde{B}(\,\cdot\,) V(\,\cdot\,)$, as $n \to \infty$, in $(D[0,1], d_{M_{2}})$.

Fix $\epsilon >0$ and let $n \in \mathbb{N}$ be large enough, i.e.
 $n > \max\{2q, 2q/\epsilon \}$.
By the definition of the metric $d_{M_{2}}$ we have
\begin{eqnarray*}
  d_{M_{2}}(CV_{n}^{Z},V_{n}) &=& \bigg(\sup_{a \in \Gamma_{CV_{n}^{Z}}} \inf_{b \in \Gamma_{V_{n}}} d(a,b) \bigg) \vee \bigg(\sup_{a \in \Gamma_{V_{n}}} \inf_{b \in \Gamma_{CV_{n}^{Z}}} d(a,b) \bigg) \\[0.4em]
   &= :& Y_{n} \vee T_{n},
\end{eqnarray*}
and therefore
\be\label{eq:AB}
\Pr [d_{M_{2}}(V_{n}^{Z}, V_{n})> \epsilon ] \leq \Pr(Y_{n}>\epsilon) + \Pr(T_{n}>\epsilon)\,.
\ee
In order to estimate the first term on the right hand side of (\ref{eq:AB})
note that
\begin{eqnarray}\label{eq:Yn}
  \nonumber\{Y_{n} > \epsilon\} & \subseteq & \{\exists\,a \in \Gamma_{CV_{n}^{Z}} \ \textrm{such that} \ d(a,b) > \epsilon \ \textrm{for every} \ b \in \Gamma_{V_{n}} \} \\[0.6em]
  \nonumber & \subseteq & \{\exists\,k \in \{1,\ldots,q-1\} \ \textrm{such that} \ | CV_{n}^{Z}(k/n) - V_{n}(k/n)| > \epsilon \}\\[0.6em]
  \nonumber & & \cup \ \{\exists\,k \in \{q,\ldots,n-q\} \ \textrm{such that} \ | CV_{n}^{Z}(k/n) - V_{n}(k/n)| > \epsilon\\[0.6em]
  \nonumber & & \hspace*{1.5em} \textrm{and} \  | CV_{n}^{Z}(k/n) - V_{n}((k+q)/n)| > \epsilon \}\\[0.6em]
  \nonumber & & \cup \ \{\exists\,k \in \{n-q+1,\ldots,n\} \ \textrm{such that} \ | CV_{n}^{Z}(k/n) - V_{n}(k/n)| > \epsilon \}\\[0.6em]
  & =: & A^{Y}_{n} \cup B^{Y}_{n} \cup C^{Y}_{n},
\end{eqnarray}
 where the second inclusion above follows from the fact that the paths of $V_n$ and $CV_n^Z$ are constant
on the intervals of the form
\[
 \left[ \frac jn , \frac{j+1}{n} \right)\,,\quad j = 0,1,\ldots, n-1\,.
\]
More precisely, if there is a point $a=(t_a,x_a) \in \Gamma_{CV_{n}^{Z}}$
 such that $d(a, \Gamma_{V_{n}}) > \epsilon$,
  then
 necessarily $t_a \in [i/n, (i+1)/n)$ for some $ i = 1,\ldots, n$.
If $a$ lies on a horizontal
part of the completed graph, then  $x_a = CV_n^Z(i/n)$ and
 $$
 \left| CV_n^Z ( i /n)- V_n ( i /n) \right| \geq d(a, \Gamma_{V_{n}}) >\epsilon.
 $$
Alternatively, if $a$ lies on a
vertical part of the completed graph, then $x_a  \in [CV_n^Z((i-1)/n), CV_n^Z(i/n))$, and one can similarly conclude that
 \[
 \left| CV_n^Z ( k /n)- V_n ( k /n) \right|>\epsilon\,
 \]
 for some $k = 1, \ldots, n$ (in fact $k=i$ or $k=i-1$; see Basrak and Krizmani\'{c}~\cite{BaKr} for details).
Moreover, if $q \leq  k \leq n-q$, from $q/n< \epsilon/2$ it follows similarly that
  \[
 \left| CV_n^Z (k /n)- V_n (( k+q) /n) \right|>\epsilon\,.
 \]
By Lemma~\ref{l:first} (i) we obtain
\begin{eqnarray}\label{eq:Bnlemmafirst}
  \nonumber \Pr (A^{Y}_{n}) & \leq &  \sum_{k=1}^{q-1} \Pr \Big( \Big| \sum_{i=1}^{k}\frac{C Z_{i}}{a_{n}} - \sum_{i=1}^{k}\frac{X_{i}}{a_{n}} \Big| >\epsilon \Big) \\[0.6em]
    \nonumber  & \leq & \sum_{k=1}^{q-1} \bigg[ \Pr \Big( \sum_{u=0}^{k-1}\frac{|Z_{k-u}|}{a_{n}}
         \sum_{s=u+1}^{q}|C_{s}| > \frac{\epsilon}{3} \Big) + \Pr \Big( \sum_{u=k-q}^{q-1}\frac{|Z_{-u}|}{a_{n}} \sum_{s=u+1}^{q}|C_{s}| > \frac{\epsilon}{3} \Big)\\[0.6em]
  \nonumber  & & \hspace*{2em} + \Pr \Big( \sum_{u=0}^{q-k-1}\frac{|Z_{-u}|}{a_{n}} \sum_{s=u+1}^{u+k}|C_{s}| > \frac{\epsilon}{3} \Big) \bigg]\\[0.6em]
      & \leq &  3(q-1)(2q-1) \Pr \Big( \frac{|Z_{0}|}{a_{n}}\,C_{*} > \frac{\epsilon}{3(2q-1)} \Big),
  \end{eqnarray}
   where $C_{*} = \sum_{s=0}^{q}|C_{s}|$. For an arbitrary $M>0$ it holds that
   \begin{eqnarray*}
     \Pr \Big( \frac{|Z_{0}|}{a_{n}}\,C_{*} > \frac{\epsilon}{3(2q-1)} \Big) &&  \\[0.7em]
      &\hspace*{-16em} =&  \hspace*{-8em} \Pr \Big( \frac{|Z_{0}|}{a_{n}}\,C_{*} > \frac{\epsilon}{3(2q-1)},\,C_{*} > M \Big) + \Pr \Big( \frac{|Z_{0}|}{a_{n}}\,C_{*} > \frac{\epsilon}{3(2q-1)},\,C_{*} \leq M \Big)\\[0.7em]
       &\hspace*{-16em} \leq &  \hspace*{-8em} \Pr \Big( C_{*} > M \Big) + \Pr \Big( \frac{|Z_{0}|}{a_{n}} > \frac{\epsilon}{3(2q-1)M} \Big).
   \end{eqnarray*}
By the
 regular variation property
we observe
\begin{equation*}
\lim_{n \to \infty} \Pr \Big( \frac{|Z_{0}|}{a_{n}} > \frac{\epsilon}{3(2q-1)M} \Big) =0,
\end{equation*}
and hence from (\ref{eq:Bnlemmafirst}) we get
$$ \limsup_{n \to \infty} \Pr (A^{Y}_{n}) \leq \Pr \Big(C_{*} > M \Big).$$
Letting $M \to \infty$ we conclude
 \be\label{eq:setBn1}
 \lim_{n \to \infty} \Pr (A^{Y}_{n}) = 0.
 \ee
Next, using Lemma~\ref{l:first} (ii) and (iii), for an arbitrary $M>0$
we obtain
\begin{eqnarray*}
  \Pr (B^{Y}_{n}\cap \{ C_{*} \leq M \}) & = &  \Pr \Big( \exists\,k \in \{q,\ldots,n-q\} \ \textrm{such that} \ |H_{n}(k)-G_{n}| > \epsilon \\[0.6em]
  & & \hspace*{2em} \textrm{and} \ |-G_{n}-T_{n}(k)| > \epsilon,\,C_{*} \leq M \Big)\\[0.6em]
   & \hspace*{-16em} \leq & \hspace*{-8em} \Pr \Big( |G_{n}|> \frac{\epsilon}{2},\,C_{*} \leq M \Big) + \sum_{k=q}^{n-q} \Pr \Big( |H_{n}(k)| >
       \frac{\epsilon}{2} \ \textrm{and} \ |T_{n}(k)| > \frac{\epsilon}{2},\,C_{*} \leq M \Big)
\end{eqnarray*}
Note that
\begin{eqnarray*}
  \Pr \Big( |G_{n}|> \frac{\epsilon}{2},\,C_{*} \leq M \Big) & \leq & \Pr \Big( C_{*} \sum_{u=0}^{q-1}\frac{|Z_{-u}|}{a_{n}}> \frac{\epsilon}{2},\,C_{*} \leq M \Big)  \\[0.5em]
   & \leq &  \Pr \Big( \sum_{u=0}^{q-1}\frac{|Z_{-u}|}{a_{n}}> \frac{\epsilon}{2M} \Big)\\[0.5em]
   & \leq &  q \Pr \Big( \frac{|Z_{0}|}{a_{n}} > \frac{\epsilon}{2qM} \Big)
\end{eqnarray*}
Similarly
\begin{eqnarray*}
  \Pr \Big( |H_{n}(k)| >
       \frac{\epsilon}{2} \ \textrm{and} \ |T_{n}(k)| > \frac{\epsilon}{2},\,C_{*} \leq M \Big) & & \\[0.6em]
   & \hspace*{-22em} \leq & \hspace*{-11em} \Pr \Big( \sum_{u=0}^{q-1}\frac{|Z_{k-u}|}{a_{n}} >
       \frac{\epsilon}{2M} \ \textrm{and} \ \sum_{u=1}^{q}\frac{|Z_{k+u}|}{a_{n}} > \frac{\epsilon}{2M}\Big)\\[0.6em]
   & \hspace*{-22em} = & \hspace*{-11em} \Pr \Big( \sum_{u=0}^{q-1}\frac{|Z_{k-u}|}{a_{n}} >
       \frac{\epsilon}{2M} \Big) \Pr \Big(\sum_{u=1}^{q}\frac{|Z_{k+u}|}{a_{n}} > \frac{\epsilon}{2M}\Big)\\[0.6em]
   & \hspace*{-22em} \leq & \hspace*{-11em} \Big[ q \Pr \Big(\frac{|Z_{0}|}{a_{n}} >
       \frac{\epsilon}{2qM} \Big) \Big]^{2},
\end{eqnarray*}
where the equality above holds since the random variables $Z_{i}$ are independent. Therefore
\begin{eqnarray*}\label{e:RVdet1}
 \nonumber \Pr (B^{Y}_{n}\cap \{ C_{*} \leq M \}) & \leq & q \Pr \Big( \frac{|Z_{0}|}{a_{n}} > \frac{\epsilon}{2qM} \Big) + \sum_{k=q}^{n-q} \Big[ q \Pr \Big(\frac{|Z_{0}|}{a_{n}} >
       \frac{\epsilon}{2qM} \Big) \Big]^{2}\\[0.6em]
   &\leq &  q \Pr \Big( \frac{|Z_{0}|}{a_{n}} > \frac{\epsilon}{2qM} \Big) + \frac{q^{2}}{n} \Big[ n \Pr
            \Big( \frac{|Z_{0}|}{a_{n}} > \frac{\epsilon}{2qM} \Big) \Big]^{2}
  \end{eqnarray*}
  and an application of the regular variation property yields
 $$ \lim_{n \to \infty}  \Pr (B^{Y}_{n}\cap \{ C_{*} \leq M \})=0.$$
 Thus
 $$ \limsup_{n \to \infty} \Pr (B^{Y}_{n}) \leq \limsup_{n \to \infty} \Pr (B^{Y}_{n}\cap \{ C_{*} > M \} \leq \Pr (C_{*} > M),$$
and letting again $M \to \infty$ we conclude
 \be\label{eq:setBn3}
 \lim_{n \to \infty} \Pr (B^{Y}_{n}) = 0.
 \ee
In a similar manner as in (\ref{eq:Bnlemmafirst}), but using (ii) from Lemma~\ref{l:first} instead of (i) we get
  \be\label{eq:setBn4}
   \lim_{n \to \infty} \Pr (C^{Y}_{n})=0.
  \ee
  From relations (\ref{eq:Yn}), (\ref{eq:setBn1}), (\ref{eq:setBn3}) and (\ref{eq:setBn4}) we obtain
  \be\label{eq:Ynend}
  \lim_{n \to \infty} \Pr(Y_{n} > \epsilon ) =0.
  \ee

 It remains to estimate the second term on the right hand side of (\ref{eq:AB}).
  For each $k \geq q$, set $V^{Z,\min}_k = \min\{CV^Z_n((k-q)/n), CV^Z_n(k/n) \}$
and $V^{Z,\max}_k = \max\{CV^Z_n((k-q)/n), CV^Z_n(k/n) \}$.
 From the definition of $T_{n}$, the Hausdorff metric and the number $n$ it follows
\begin{eqnarray}\label{eq:Zn}
  \nonumber\{T_{n} > \epsilon\} & \subseteq & \{\exists\,a \in \Gamma_{V_{n}} \ \textrm{such that} \ d(a,b) > \epsilon \ \textrm{for every} \ b \in \Gamma_{CV_{n}^{Z}} \} \\[0.6em]
  \nonumber & \subseteq & \{\exists\,k \in \{1,\ldots,2q-1\} \
\ \textrm{such that} \ | V_{n}(k/n) - CV_{n}^{Z}(k/n)| > \epsilon \}\\[0.6em]
  \nonumber & & \cup \ \Big\{\exists\,k \in \{2q,\ldots,n\} \
   \textrm{such that} \
  \widetilde{d}(V_{n}(k/n), [V_{k}^{Z, \min}, V_{k}^{Z, \max}]) > \epsilon
    \Big\}\\[0.6em]
  & =: & A^{T}_{n} \cup B^{T}_{n},
\end{eqnarray}
where $\widetilde{d}$ is the Euclidean metric on $\mathbb{R}$.
 The argument behind the second inclusion in (\ref{eq:Zn}) is similar to the one given after (\ref{eq:Yn}).
 Indeed, assume there is a point $a=(t_a,x_a) \in \Gamma_{V_{n}}$
 such that
 \begin{equation}\label{eq:novo1}
 d(a, \Gamma_{CV_{n}^{Z}}) > \epsilon.
 \end{equation}
  Then
 necessarily $t_a \in [i/n, (i+1)/n)$ for some $ i = 1,\ldots, n$.
 The case $ i \leq 2q-1$ is covered  by the same argument used to obtain (\ref{eq:Yn}) and the set $A^{Y}_{n}$.
 Therefore, we may assume $i \geq 2q$. From (\ref{eq:novo1}) we immediately obtain
 \begin{equation}\label{eq:novo2}
 d ( a, ( i/n, CV_{n}^{Z}(i/n))) > \epsilon \quad \textrm{and} \quad d ( a, ( (i-q)/n, CV_{n}^{Z}((i-q)/n))) > \epsilon.
 \end{equation}
 Suppose first that $x_a = V_n(i/n)$ for some $i=2q,\ldots,n$. Recall that $q/n < \epsilon/2$. Since
 $ \max \{|t_{a}-i/n|, |t_{a}- (i-q)/n| \} \leq (q+1)/n < \epsilon$, from (\ref{eq:novo2}) we conclude that
 $$ \widetilde{d}(V_{n}(i/n), [V_{i}^{Z, \min}, V_{i}^{Z, \max}]) > \epsilon.$$
 If $x_a  \in [V_n((i-1)/n), V_n(i/n))$ (in this case $t_{a}=i/n$), relation (\ref{eq:novo2}) again implies
 $ \widetilde{d}(x_{a}, [V_{i}^{Z, \min}, V_{i}^{Z, \max}]) > \epsilon$,
 and similarly
 $ \widetilde{d}(x_{a}, [V_{i-1}^{Z, \min}, V_{i-1}^{Z, \max}]) > \epsilon$. Thus we obtain
 $$ \max \{ \widetilde{d}(V_{n}(i/n), [V_{i}^{Z, \min}, V_{i}^{Z, \max}]), \widetilde{d}(V_{n}((i-1)/n), [V_{i-1}^{Z, \min}, V_{i-1}^{Z, \max}])\} > \epsilon.$$
 Finally we conclude that there exists $k \in \{2q,\ldots,n\}$
   such that
  $$\widetilde{d}(V_{n}(k/n), [V_{k}^{Z, \min}, V_{k}^{Z, \max}]) > \epsilon.$$

Using Lemma~\ref{l:first} (i) and (ii), one could similarly as before for the set $A^{Y}_{n}$ obtain
\be\label{eq:Cnfirst}
\lim_{n \to \infty} \Pr( A^{T}_{n})=0.
\ee
 Note that $\Pr (B_n^T)$ is bounded above by
\begin{eqnarray*}
\lefteqn{ \Pr \left(
\exists\,k \in \{2q,\ldots,n\} \ \textrm{such that} \
\sum_{i=1}^k \frac{X_i}{a_n} > V^{Z,\max}_k + \epsilon \right)}\\
&+&
\Pr \left(
\exists\,k \in \{2q,\ldots,n\} \ \textrm{such that} \
\sum_{i=1}^k \frac{X_i}{a_n} < V^{Z,\min}_k - \epsilon \right)\,.
\end{eqnarray*}
In the sequel we consider only the first of these two probabilities,
since the other one can be handled in a similar manner.
The first probability
using Lemma~\ref{l:first} can be bounded  by
\begin{eqnarray*}
\lefteqn{\Pr \left(
\exists\,k \in \{2q,\ldots,n\} \ \textrm{such that} \
G_n - H_n(k) > \epsilon \ \mbox{ and }\
G_n + T_n(k-q) > \epsilon   \right)} \\
& \leq &
\Pr \left(G_n  >  \frac{\epsilon}{2}\right)\\
&&  +
\Pr \left(
\exists\,k \in \{2q,\ldots,n\} \ \textrm{such that} \
H_n(k) < -  \frac{\epsilon}{2} \ \mbox{ and }\
T_n(k-q) >  \frac{\epsilon}{2}   \right)\,.
\end{eqnarray*}
From the calculations yielding (\ref{eq:setBn3}) we conclude that $\Pr (G_n  > \epsilon/2 )\to 0$
 as $n \to \infty$. The second term is bounded by
\begin{equation}\label{eq:BnTpom}
\Pr(C_{*}> M) + \sum_{k=2q}^{n} \Pr \left(
H_{n}(k) < - \frac{\epsilon}{2}
\ \mbox{ and }
T_{n}(k-q) > \frac{\epsilon}{2},\,C_{*} \leq M
 \right)
\end{equation}
for an arbitrary $M>0$. Note that
$$
 H_n(k)= \sum_{u=0}^{q-1}\frac{Z_{k-u}}{a_{n}} \sum_{s=u+1}^{q}C_{s}
\ \mbox{ and } T_n(k-q) = \sum_{u=0}^{q-1}\frac{Z_{k-u}}{a_{n}}
 \sum_{s=0}^{u}C_{s}.
 $$
Therefore for a fixed $k \in \{2q,\ldots,n\}$, on the event
$ \{ H_{n}(k)
< - \epsilon/2  \ \textrm{and} \
T_{n}(k-q)
> \epsilon/2,\,C_{*} \leq M \}$
 there exist $i,j \in \{0,\ldots,q-1\}$ such that
 $$\frac{Z_{k-i}}{a_{n}} \sum_{s=i+1}^{q}C_{s}
< - \frac{\epsilon}{2q} \quad \textrm{and} \quad \frac{Z_{k-j}}{a_{n}}
 \sum_{s=0}^{j}C_{s}
> \frac{\epsilon}{2q}.$$
From \eqref{eq:FiniteMAcond} it follows that the sums
$\sum_{s=0}^{j}C_{s}$
and $ \sum_{s=i+1}^{q}C_{s}$ are a.s.~of the same sign and
their absolute values are bounded by $C_{*}$.
Hence if these sums are positive we obtain
$Z_{k-i}M/a_{n} < - \epsilon/(2q)$ and
$Z_{k-j}M/a_{n} > \epsilon/(2q)$, while if they are negative we obtain $Z_{k-i}M/a_{n} > \epsilon/(2q)$ and $Z_{k-j}M/a_{n} < -\epsilon/(2q)$. Note that the case $i=j$ is not possible since then we would have $Z_{k-i}<0$ and $Z_{k-i}>0$. From this, using the stationarity of the sequence $(Z_{i})$, we conclude that the expression in (\ref{eq:BnTpom}) is bounded by
\begin{eqnarray*}\label{e:RVdet2}
\nonumber & & \hspace*{-2em} {\Pr(C_{*}> M) +  n \Pr\left(
\exists\, i,j \in \{0,\ldots,q-1\},\,i \neq j \ \textrm{s.t.} \
M \frac{Z_{-i }}{a_{n}}< -  \frac{\epsilon}{2q}
 \mbox{ and } M \frac{Z_{-j }}{a_{n}} >  \frac{\epsilon}{2q} \right)} \\
& \leq & \Pr(C_{*}> M) + n {q \choose 2}
 \bigg[ \Pr\left(
 \frac{|Z_{0}|}{a_{n}} >   \frac{\epsilon}{2 q M}
 \right) \bigg]^2,
\end{eqnarray*}
which tends to 0 if we first let $n\toi$ and then $M \to \infty$.
Together with relations (\ref{eq:Zn}) and (\ref{eq:Cnfirst}) this implies
\be\label{eq:Tnend}
\lim_{n \to \infty} \Pr(T_{n}>\epsilon)=0.
\ee
Now from (\ref{eq:AB}), (\ref{eq:Ynend}) and (\ref{eq:Tnend}) we obtain
\be
\lim_{n \to \infty} \Pr [d_{M_{2}}(CV_{n}^{Z}, V_{n})> \epsilon ]=0,
\ee
and finally we conclude that $V_{n}(\,\cdot\,) \dto \widetilde{B}(\,\cdot\,)V(\,\cdot\,)$, as $n \to \infty$, in $(D[0,1], d_{M_{2}})$.
This concludes the proof.
\end{proof}

\section{Infinite order MA processes}
\label{S:InfiniteMA}

Let $(X_{i})$ be a moving average process defined by
$$ X_{i} = \sum_{j=0}^{\infty}C_{j}Z_{i-j}, \qquad i \in \mathbb{Z},$$
where $(Z_{i})$ is an i.i.d.~sequence of regularly varying random variables with index $\alpha \in (0,2)$, such that $\mathrm{E}Z_{i}=0$ if $\alpha \in (1,2)$ and $Z_{i}$ is symmetric if $\alpha =1$.
Let $\{C_{i}, i=0,1,2,\ldots\}$ be a sequence of random variables, independent of $(Z_{i})$, satisfying
\be\label{eq:convcond}
 \sum_{i=0}^{\infty}\mathrm{E}|C_{i}|^{\delta} < \infty \qquad \textrm{for some} \ \delta < \alpha, \ 0 < \delta \leq 1,
\ee
 and
\be\label{eq:InfiniteMAcond}
0 \le \sum_{i=0}^{s}C_{i} \Bigg/ \sum_{i=0}^{\infty}C_{i} \le 1 \ \ \textrm{a.s.} \qquad \textrm{for every} \ s=0, 1, 2 \ldots.
\ee
Let $ C = \sum_{i=0}^{\infty}C_{i}$ and $B(t)=C$ for all $t \in [0,1]$. Condition (\ref{eq:convcond}) implies $C$ is a.s.~finite, and ensures that the series in the definition of $X_{i}$ above converges almost surely.
Define further the corresponding partial sum stochastic process $V_{n}$ as in (\ref{eq:defVn}). Beside the above stated conditions, we require also the following conditions: for $\alpha \in (0,1)$
\be\label{e:mod1}
 \sum_{i=0}^{\infty}\mathrm{E}|C_{i}|^{\gamma} < \infty \qquad \textrm{for some} \ \gamma \in (\alpha, 1),
\ee
and for $\alpha \in [1,2)$
 \begin{equation}\label{e:vod1}
 \lim_{n \to \infty} (\ln n)^{1+\eta}\,\mathrm{E} \bigg[ \bigg( \sum_{i=n}^{\infty} |C_{i}| \bigg)^{\eta-\delta} \sum_{j=n}^{\infty}|C_{j}|^{\delta} \bigg] =0 \qquad \textrm{for some} \ \eta > \alpha.
 \end{equation}
 The latter condition is borrowed from Avram and Taqqu~\cite{AvTa92}, where they studied $M_{1}$ functional convergence of sums of moving averages with deterministic coefficients. Since in the case $\alpha \in (1,2)$ we will also need that the series $\sum_{i=1}^{\infty} \mathrm{E}|C_{i}|$ converges, we assume $\delta=1$ in (\ref{eq:convcond}) if $\alpha >1$.


\begin{thm}\label{t:InfMA}
Let $(Z_{i})_{i \in \mathbb{Z}}$ be an i.i.d.~sequence of regularly varying random variables with index $\alpha \in (0,2)$. Suppose that conditions $(\ref{e:oceknula})$ and $(\ref{e:sim})$ hold. Let $\{C_{i}, i=0,1,2,\ldots\}$ be a sequence of random variables, independent of $(Z_{i})$, such that $(\ref{eq:convcond})$ and $(\ref{eq:InfiniteMAcond})$ hold. Assume also $(\ref{e:mod1})$ holds if $\alpha \in (0,1)$, and $(\ref{e:vod1})$ if $\alpha \in [1,2)$. Then
$$ V_{n}(\,\cdot\,) \dto \widetilde{B}(\,\cdot\,) V(\,\cdot\,), \qquad n \to \infty,$$
in $D[0,1]$ endowed with the $M_{2}$ topology, where $V$ is an $\alpha$--stable L\'{e}vy process with characteristic triple $(0, \mu, b)$, with $\mu$ as in $(\ref{eq:mu})$ and
$$ b = \left\{ \begin{array}{cc}
                                   0, & \quad \alpha = 1,\\[0.4em]
                                   (p-r)\frac{\alpha}{1-\alpha}, & \quad \alpha \in (0,1) \cup (1,2),
                                 \end{array}\right.$$
   and $\widetilde{B}$ is a random element in $D[0,1]$, independent of $V$, such that $\widetilde{B} \eind B$.
\end{thm}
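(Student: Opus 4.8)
The plan is to approximate $V_n$ by a \emph{finite} order moving average partial sum process, apply Theorem~\ref{t:FinMA}, and then show the error is asymptotically negligible. For $q\in\NN$ set $R_q:=\sum_{j\ge q}C_j$ (a.s.\ finite by \eqref{eq:convcond}) and define
\[
 X_i^{(q)}:=\sum_{j=0}^{q-1}C_jZ_{i-j}+R_qZ_{i-q},\qquad
 V_n^{(q)}(t):=\frac1{a_n}\sum_{i=1}^{\floor{nt}}X_i^{(q)},\quad t\in[0,1].
\]
Lumping the entire tail of the series into a single coefficient at lag $q$ is essential: the coefficients $C_0,\dots,C_{q-1},R_q$ of $X_i^{(q)}$ sum to $\sum_{j=0}^{q-1}C_j+R_q=C$, and their partial sums $\sum_{j=0}^sC_j$, $s\le q-1$, lie a.s.\ between $0$ and $C$ by \eqref{eq:InfiniteMAcond}, so $C_0,\dots,C_{q-1},R_q$ satisfy \eqref{eq:FiniteMAcond} with the \emph{same} total $C$ (ordinary truncation $\sum_{j=0}^qC_jZ_{i-j}$ need not preserve \eqref{eq:FiniteMAcond}). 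Hence Theorem~\ref{t:FinMA} applies to $V_n^{(q)}$ and gives, for every fixed $q$,
\[
 V_n^{(q)}(\,\cdot\,)\dto\widetilde B(\,\cdot\,)V(\,\cdot\,),\qquad n\to\infty,
\]
in $(D[0,1],d_{M_2})$, with $V$ and $\widetilde B$ exactly as in the statement; in particular the limiting process has the same law for every $q$.

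By a standard approximation theorem for weak convergence on a metric space (cf.\ Billingsley~\cite{Bi68}), applicable since $(D[0,1],d_{M_2})$ is separable and topologically complete, it remains to prove
\[
 \lim_{q\to\infty}\limsup_{n\to\infty}\Pr\bigl[d_{M_2}(V_n,V_n^{(q)})>\eps\bigr]=0\qquad\text{for every }\eps>0.
\]
Since $d_{M_2}$ is dominated by the uniform metric it suffices to bound $\Pr[\sup_t|V_n(t)-V_n^{(q)}(t)|>\eps]$. Here one uses a telescoping rearrangement of $\sum_{i=1}^k(X_i-X_i^{(q)})=\sum_{j>q}C_j\sum_{i=1}^k(Z_{i-j}-Z_{i-q})$, entirely analogous to Lemma~\ref{l:first}: after interchanging summations the remainder splits into a part built only from innovations $Z_m$ whose indices remain bounded uniformly in $n$ (hence, after division by $a_n$, tending to $0$ in probability for each fixed $q$ as $n\to\infty$, once one checks from the moment hypotheses that the relevant series in the tail coefficients $R_l=\sum_{j\ge l}C_j$ converge a.s.), plus a ``running'' part of the form $a_n^{-1}\max_{1\le k\le n}\bigl|\sum_{l>q}R_lZ_{k+1-l}\bigr|$. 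Everything thus reduces to showing this maximum is small, first letting $n\to\infty$ and then $q\to\infty$.

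The running maximum is handled differently in the two regimes. For $\alpha\in(0,1)$ one conditions on $(C_j)$ and combines the single-large-jump tail estimate for weighted sums of the $Z_i$ with a union bound over $k\le n$; the factor $n$ is absorbed by $n\Pr(|Z_1|>a_n)\to1$, and condition \eqref{e:mod1} (via subadditivity of $x\mapsto x^\gamma$ with $\gamma\in(\alpha,1)$ and interpolation between the $\delta$-th and $\gamma$-th moments of the $C_j$) forces the resulting bound, a tail sum of $\Exp|R_l|^\alpha$-type quantities, to vanish as $q\to\infty$. For $\alpha\in[1,2)$ the tails of $Z_1$ are too heavy for such a crude argument: one must center the innovations (this is where the hypothesis $\delta=1$, i.e.\ $\sum_j\Exp|C_j|<\infty$, enters) and invoke the maximal inequality proved in the Appendix, whose statement is calibrated so that the logarithmic reserve $(\ln n)^{1+\eta}$, $\eta>\alpha$, supplied by condition \eqref{e:vod1} (borrowed from Avram and Taqqu~\cite{AvTa92}) exactly offsets the loss incurred by passing to the maximum over all $k\le n$.

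I expect the $\alpha\in[1,2)$ case of the negligibility step to be the main obstacle: there is no avoiding a delicate maximal inequality for partial sums of a heavy-tailed linear process with random coefficients, and making the logarithmic bookkeeping close — dovetailing the Appendix estimate with condition \eqref{e:vod1} — is the technical core of the argument. The remaining ingredients (the truncation, the finite order input from Theorem~\ref{t:FinMA}, the $\alpha<1$ tail estimates, and the final assembly via the approximation theorem) are routine once that is in hand.
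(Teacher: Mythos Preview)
Your truncation device---lumping the tail $\sum_{j\ge q}C_j$ into a single coefficient $R_q$ at lag $q$ so that condition \eqref{eq:FiniteMAcond} is preserved with the same total $C$---is exactly the paper's, and for $\alpha\in(0,1)$ your scheme (fixed $q$, then $q\to\infty$, via a generalized Slutsky argument) matches the paper's structure. The paper handles the negligibility there more directly than you sketch: it simply bounds $\sup_t|V_n-V_n^{(q)}|\le a_n^{-1}\sum_{i=1}^n|X_i-X_i^{(q)}|$, splits the innovations at level $a_n$, and applies Markov's inequality with exponents $\delta<\alpha<\gamma$ together with Karamata's theorem, rather than a single-large-jump\,/\,union-bound argument.

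The genuine gap is in the $\alpha\in[1,2)$ case. The maximal inequality controlling the running maximum (which the paper imports from Avram--Taqqu~\cite{AvTa89,AvTa92}, \emph{not} from its own Appendix) carries an unavoidable factor $(\ln n)^{1+\eta}$. With $q$ \emph{fixed}, the accompanying tail quantity $\mathrm E\bigl[(\sum_{j>q}|C_j|)^{\eta-\delta}\sum_{j>q}|C_j|^\delta\bigr]$ is a constant in $n$, so the bound blows up as $n\to\infty$ and you never get to send $q\to\infty$. Condition \eqref{e:vod1} is stated precisely to kill this logarithm, but only if the truncation level grows with $n$: the paper takes $q_n=\lfloor n^{1/10}\rfloor$, so that $\ln n=O(\ln q_n)$ and \eqref{e:vod1} applies directly. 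This diagonal choice then creates a second task you did not anticipate: one must show $V_{n,q_n}\dto\widetilde B\,V$ with $q_n$ depending on $n$, which Theorem~\ref{t:FinMA} does not deliver as stated. The paper re-enters the proof of Theorem~\ref{t:FinMA} and verifies that the key probability bounds \eqref{eq:setBn1}, \eqref{eq:setBn3}, \eqref{eq:Tnend} still vanish when $q$ is replaced by $q_n$; this reduces to $n\,q_n^{2}\bigl[\Pr(|Z_0|>a_n/q_n)\bigr]^{2}\to 0$, and \emph{that} is the content of the Appendix lemma---a regular-variation estimate, not a maximal inequality.
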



\begin{proof}
For $q \in \mathbb{N}$ define
$$ X_{i}^{q} = \sum_{j=0}^{q-1}C_{j}Z_{i-j} + C'_{q} Z_{i-q}, \qquad i \in \mathbb{Z},$$
where $C'_{q}= \sum_{i=q}^{\infty}C_{i}$,
and
$$ V_{n, q}(t) = \sum_{i=1}^{\floor{nt}} \frac{X_{i}^{q}}{a_{n}}, \qquad t \in [0,1].$$
Now we treat separately the cases $\alpha \in (0,1)$, $\alpha \in (1,2)$ and $\alpha=1$.\\[-0.8em]

Case $\alpha \in (0,1)$.
Fix $q \in \mathbb{N}$.
 Since the coefficients $C_{0}, \ldots, C_{q-1}, C'_{q}$ satisfy condition (\ref{eq:FiniteMAcond}), an application of Theorem~\ref{t:FinMA} to a finite order moving average process $(X_{i}^{q})_{i}$ yields that, as $n \to \infty$,
\be
V_{n, q}(\,\cdot\,) \dto \widetilde{B}(\,\cdot\,) V(\,\cdot\,)
\ee
in $(D[0,1], d_{M_{2}})$. If we show that for every $\epsilon >0$
$$ \lim_{q \to \infty} \limsup_{n \to \infty}\Pr[d_{M_{2}}(V_{n, q}, V_{n})> \epsilon]=0,$$
then by a generalization of Slutsky's theorem (see for instance Theorem 3.5 in Resnick~\cite{Resnick07}) it will follow $V_{n}(\,\cdot\,) \dto \widetilde{B}(\,\cdot\,) V(\,\cdot\,)$, as $n \to \infty$, in $(D[0,1], d_{M_{2}})$. Since the Skorohod $M_{2}$ metric on $D[0,1]$ is bounded above by the uniform metric on $D[0,1]$, it suffices to show that
$$ \lim_{q \to \infty} \limsup_{n \to \infty}\Pr \bigg( \sup_{0 \leq t \leq 1}|V_{n, q}(t) - V_{n}(t)|> \epsilon \bigg)=0.$$
Recalling the definitions, we have
\begin{eqnarray*}
  \lim_{q \to \infty} \limsup_{n \to \infty}\Pr \bigg( \sup_{0 \leq t \leq 1}|V_{n, q}(t) - V_{n}(t)|> \epsilon \bigg) & &  \\[0.6em]
   & \hspace*{-26em} \leq & \hspace*{-13em} \lim_{q \to \infty} \limsup_{n \to \infty}\Pr \bigg( \sum_{i=1}^{n}\frac{|X_{i}^{q}-X_{i}|}{a_{n}} > \epsilon \bigg).
\end{eqnarray*}
Put $C''_{q} = C'_{q} - C_{q} = \sum_{j=q+1}^{\infty}C_{j}$ and observe
\begin{eqnarray*}
  \sum_{i=1}^{n}|X_{i}^{q}-X_{i}| & = & \sum_{i=1}^{n} \bigg| \sum_{j=0}^{q-1}C_{j}Z_{i-j} + C'_{q}Z_{i-q} - \sum_{j=0}^{\infty}C_{j}Z_{i-j}\bigg| \\[0.6em]
  & = & \sum_{i=1}^{n} \bigg| C''_{q} Z_{i-q} - \sum_{j=q+1}^{\infty}C_{j}Z_{i-j}\bigg|\\[0.6em]
  & \leq & \sum_{i=1}^{n} \bigg[ |C''_{q}|\,|Z_{i-q}| + \sum_{j=q+1}^{\infty}|C_{j}|\,|Z_{i-j}| \bigg]\\[0.6em]
   & \leq & \bigg( 2 \sum_{j=q+1}^{\infty}|C_{j}| \bigg) \sum_{i=1}^{n}|Z_{i-q}| + \sum_{i=-\infty}^{0}|Z_{i-q}| \sum_{j=1}^{n}|C_{q-i+j}|.
\end{eqnarray*}
Let
$$  D^{n,q}_{i} =  \left\{ \begin{array}{cl}
                                   \displaystyle 2 \sum_{j=q+1}^{\infty}|C_{j}|, & \quad i=1,\ldots,n,\\[1.5em]
                                   \displaystyle \sum_{j=1}^{n} |C_{q-i+1}|, & \quad  i \leq 0.
                                 \end{array}\right.$$
Therefore it is enough to show
\be\label{eq:Oiqn1}
\lim_{q \to \infty} \limsup_{n \to \infty} \Pr \bigg(\sum_{i=-\infty}^{n} \frac{D^{n,q}_{i}|Z_{i-q}|}{a_{n}} > \epsilon \bigg)=0.
\ee
Let
$$ Z^{\leq}_{i,n} = \frac{Z_{i}}{a_{n}} 1_{\big\{ \frac{|Z_{i}|}{a_{n}} \leq 1 \big\}} \quad \textrm{and} \quad
Z^{>}_{i,n} = \frac{Z_{i}}{a_{n}} 1_{\big\{ \frac{|Z_{i}|}{a_{n}} > 1 \big\}},$$
and note that the probability in (\ref{eq:Oiqn1}) is bounded above by
\begin{equation}\label{e:Diqn1}
\Pr \bigg(\sum_{i=-\infty}^{n} D^{n,q}_{i} |Z^{\leq}_{i-q,n}| > \frac{\epsilon}{2} \bigg) +
\Pr \bigg(\sum_{i=-\infty}^{n} D^{n,q}_{i} |Z^{>}_{i-q,n}| > \frac{\epsilon}{2} \bigg).
\end{equation}
Using Markov's inequality, the triangle inequality $|\sum_{i=1}^{\infty}a_{i}|^{s} \leq \sum_{i=1}^{\infty}|a_{i}|^{s}$ with $s \in (0,1]$, the fact that $(C_{i})$ is independent of $(Z_{i})$ and the stationarity of the sequence $(Z_{i})$, for the first term in (\ref{e:Diqn1}) we obtain
\begin{eqnarray*}
  \nonumber \Pr \bigg(\sum_{i=-\infty}^{n} D^{n,q}_{i} |Z^{\leq}_{i-q,n}| > \frac{\epsilon}{2} \bigg) & \leq & \Big(\frac{\epsilon}{2} \Big)^{-\gamma} \mathrm{E} \bigg( \sum_{i=-\infty}^{n} D^{n,q}_{i}|Z^{\leq}_{i-q,n}| \bigg)^{\gamma} \\[0.5em]
   & \leq & \Big(\frac{\epsilon}{2} \Big)^{-\gamma} \mathrm{E} \bigg( \sum_{i=-\infty}^{n} (D^{n,q}_{i})^{\gamma}|Z^{\leq}_{i-q,n}|^{\gamma} \bigg)\\[0.5em]
  & \leq & \Big(\frac{\epsilon}{2} \Big)^{-\gamma} \mathrm{E}|Z^{\leq}_{1,n}|^{\gamma}  \sum_{i=-\infty}^{n} \mathrm{E}(D^{n,q}_{i})^{\gamma}.
\end{eqnarray*}
Again by triangle inequality we have
$$ \sum_{i=-\infty}^{n} \mathrm{E} (D^{n,q}_{i})^{\gamma} \leq 2^{\gamma}n  \sum_{j=q+1}^{\infty} \mathrm{E} |C_{j}|^{\gamma} + \sum_{i=-\infty}^{0} \sum_{j=1}^{n} \mathrm{E} |C_{q-i+j}|^{\gamma},$$
Note that every $\mathrm{E}|C_{j}|^{\gamma}$, for $j=q+1, q+2, \ldots$, appears in the sum $\sum_{i=-\infty}^{0} \sum_{j=1}^{n} \mathrm{E} |C_{q-i+j}|^{\gamma}$ at most $n$ times, and hence
\begin{eqnarray}\label{e:Diqn2}
  \nonumber \Pr \bigg(\sum_{i=-\infty}^{n} D^{n,q}_{i} |Z^{\leq}_{i-q,n}| > \frac{\epsilon}{2} \bigg) & \leq & \Big(\frac{\epsilon}{2} \Big)^{-\gamma} \mathrm{E}|Z^{\leq}_{1,n}|^{\gamma}  \bigg(
  2^{\gamma}n  \sum_{j=q+1}^{\infty} \mathrm{E} |C_{j}|^{\gamma} + n  \sum_{j=q+1}^{\infty} \mathrm{E} |C_{j}|^{\gamma} \bigg) \\[0.6em]
  & = & (2^{\gamma}+1) \Big(\frac{\epsilon}{2} \Big)^{-\gamma} n \mathrm{E}|Z^{\leq}_{1,n}|^{\gamma}  \sum_{j=q+1}^{\infty} \mathrm{E} |C_{j}|^{\gamma}.
\end{eqnarray}
Similarly
\begin{equation}\label{e:Diqn3}
   \Pr \bigg(\sum_{i=-\infty}^{n} D^{n,q}_{i} |Z^{>}_{i-q,n}| > \frac{\epsilon}{2} \bigg)
   \leq (2^{\delta}+1) \Big(\frac{\epsilon}{2} \Big)^{-\delta} n \mathrm{E}|Z^{>}_{1,n}|^{\delta}  \sum_{j=q+1}^{\infty} \mathrm{E} |C_{j}|^{\delta}.
\end{equation}
By Karamata's theorem and (\ref{eq:niz}), as $n \to \infty$,
$$ n\mathrm{E}|Z^{\leq}_{1,n}|^{\gamma} = \frac{\mathrm{E}(|Z_{1}|^{\gamma}1_{\{ |Z_{1}| \leq a_{n}\}})}{a_{n}^{\gamma} \Pr(|Z_{1}| > a_{n})} \cdot n \Pr(|Z_{1}| > a_{n}) \to \frac{\alpha}{\gamma - \alpha} < \infty$$
and
$$ n\mathrm{E}|Z^{>}_{1,n}|^{\delta} = \frac{\mathrm{E}(|Z_{1}|^{\delta}1_{\{ |Z_{1}| > a_{n}\}})}{a_{n}^{\delta} \Pr(|Z_{1}| > a_{n})} \cdot n \Pr(|Z_{1}| > a_{n}) \to \frac{\alpha}{\alpha - \delta} < \infty.$$
From this and relations (\ref{e:Diqn2}) and (\ref{e:Diqn3}) we conclude that
$$ \limsup_{n \to \infty} \Pr \bigg(\sum_{i=-\infty}^{n} \frac{D^{n,q}_{i}|Z_{i-q}|}{a_{n}} > \epsilon \bigg) \leq M \bigg( \sum_{j=q+1}^{\infty} \mathrm{E} |C_{j}|^{\gamma} + \sum_{j=q+1}^{\infty} \mathrm{E} |C_{j}|^{\delta} \bigg),$$
where $M =  (2^{\gamma}+1) (\epsilon/2)^{-\gamma} \alpha/(\gamma - \alpha) +  (2^{\delta}+1) (\epsilon/2)^{-\delta} \alpha/(\alpha - \delta) < \infty$. Now letting $q \to \infty$, conditions (\ref{eq:convcond}) and (\ref{e:mod1}) imply (\ref{eq:Oiqn1}), which means that $V_{n}(\,\cdot\,) \dto \widetilde{B}(\,\cdot\,) V(\,\cdot\,)$, as $n \to \infty$, in $(D[0,1], d_{M_{2}})$.\\[-0.8em]

Case $\alpha \in (1,2)$.
Let $(q_{n})$ be a sequence of positive integers such that $q_{n}=\lfloor n^{1/10} \rfloor$. We first show that $\lim_{n \to \infty} \Pr[d_{M_{2}}(V_{n,q_{n}}, V_{n})> \epsilon]=0$ for every $\epsilon >0$. For this, similar to the case $\alpha \in (0,1)$, it suffices to show that
$$ \lim_{n \to \infty}\Pr \bigg( \sup_{0 \leq t \leq 1}|V_{n, q_{n}}(t) - V_{n}(t)|> \epsilon \bigg)=0.$$
Recalling the definitions, we have
\begin{equation*}
  V_{n, q_{n}}(t) - V_{n}(t) = \frac{1}{a_{n}} \sum_{i=1}^{\lfloor nt \rfloor}(X_{i}^{q}-X_{i}) = \frac{1}{a_{n}} \sum_{i=1}^{\lfloor nt \rfloor} \bigg( C''_{q_{n}}Z_{i-q_{n}} + \sum_{j=q_{n}+1}^{\infty}C_{j}Z_{i-j} \bigg),
\end{equation*}
and hence
\begin{eqnarray}\label{e:aproksqn}
  \nonumber\Pr \bigg( \sup_{0 \leq t \leq 1}|V_{n, q_{n}}(t) - V_{n}(t)|> \epsilon \bigg) &&  \\[0.5em]
  \nonumber & \hspace*{-22em} \leq & \hspace*{-11em}  \Pr \bigg( \sup_{0 \leq t \leq 1} \bigg| \sum_{i=1}^{\lfloor nt \rfloor} \frac{C''_{q_{n}} Z_{i-q_{n}}}{a_{n}} \bigg| > \frac{\epsilon}{2} \bigg) + \Pr \bigg( \sup_{0 \leq t \leq 1} \bigg| \sum_{i=1}^{\lfloor nt \rfloor} \sum_{j=q_{n}+1}^{\infty} \frac{C_{j} Z_{i-j}}{a_{n}} \bigg| > \frac{\epsilon}{2} \bigg)\\[0.6em]
   & \hspace*{-22em} =:& \hspace*{-11em} I_{1} + I_{2}.
\end{eqnarray}
Let
$$ \widetilde{Z}^{\leq}_{i,n} = Z^{\leq}_{i,n} - \mathrm{E} Z^{\leq}_{i,n} \qquad \textrm{and} \qquad
\widetilde{Z}^{>}_{i,n} = Z^{>}_{i,n} + \mathrm{E} Z^{\leq}_{i,n},$$
and note that $Z_{i}/a_{n}=  \widetilde{Z}^{\leq}_{i,n} + \widetilde{Z}^{>}_{i,n}$, $ \mathrm{E} \widetilde{Z}^{\leq}_{i,n}=0$ and also
$ \mathrm{E} \widetilde{Z}^{>}_{i,n} = \mathrm{E} Z^{>}_{i,n} + \mathrm{E} Z^{\leq}_{i,n} = \mathrm{E}(Z_{i}/a_{n})=0$. Thus
\begin{eqnarray*}
  I_{1} & \leq & \Pr \bigg( \sup_{1 \leq k \leq n} \bigg| \sum_{i=1}^{k} C''_{q_{n}} \widetilde{Z}^{\leq}_{i-q_{n},n} \bigg| > \frac{\epsilon}{4} \bigg) + \Pr \bigg( \sup_{1 \leq k \leq n} \bigg| \sum_{i=1}^{k} C''_{q_{n}} \widetilde{Z}^{>}_{i-q_{n},n} \bigg| > \frac{\epsilon}{4} \bigg)\\[0.4em]
  &=:& I_{11}+I_{12}.
\end{eqnarray*}
 Since $C''_{q_{n}}$ is independent of $(Z_{i})$ and $\mathrm{E} \widetilde{Z}^{\leq}_{i,n}=0$, it follows that $( \sum_{i=1}^{k} C''_{q_{n}} \widetilde{Z}^{\leq}_{i-q_{n},n} )_{k}$ is a martingale (with respect to the filtration $(\mathcal{F}_{k})$, where the $\sigma$--field $\mathcal{F}_{k}$ is generated by $C_{i},\,i \geq 0$ and $Z_{j-q_{n}},\,j \leq k-q_{n}$). Hence by Markov's inequality and Doob's maximal inequality
 $$ \mathrm{E} \bigg( \sup_{1 \leq k \leq n} |S_{k}| \bigg)^{\kappa} \leq \Big( \frac{\kappa}{\kappa-1} \Big)^{\kappa} \mathrm{E}|S_{n}|^{\kappa},$$
 which holds for $\kappa >1$ and $(S_{k})_{k}$ a martingale (see Durrett~\cite{Du96}, p.~251) we obtain
 $$ I_{11} \leq \Big( \frac{\epsilon}{4} \Big)^{-\eta} \Big( \frac{\eta}{\eta-1} \Big)^{\eta} \mathrm{E} \bigg| \sum_{i=1}^{n} C''_{q_{n}} \widetilde{Z}^{\leq}_{i-q_{n},n} \bigg|^{\eta}.$$
Note that $(C''_{q_{n}}\widetilde{Z}^{\leq}_{i-q_{n},n})_{i}$ is a martingale difference sequence, and hence by the Bahr-Esseen inequality
$$ \mathrm{E} \bigg| \sum_{j=1}^{n}Y_{j} \bigg|^{\kappa} \leq 2 \sum_{j=1}^{n} \mathrm{E}|Y_{j}|^{\kappa},$$
which holds for $ \kappa \in [1,2]$ and $(Y_{j})_{j}$ a martingale-difference sequence
(see Chatterji~\cite{Ch69}, Lemma 1) we have
\begin{eqnarray*}
   I_{11} & \leq & 2 \Big( \frac{\epsilon}{4} \Big)^{-\eta} \Big( \frac{\eta}{\eta-1} \Big)^{\eta} \sum_{i=1}^{n} \mathrm{E} | C''_{q_{n}} \widetilde{Z}^{\leq}_{i-q_{n},n} |^{\eta}\\[0.4em]
    & = & 2 \Big( \frac{\epsilon}{4} \Big)^{-\eta} \Big( \frac{\eta}{\eta-1} \Big)^{\eta} \mathrm{E} | \widetilde{Z}^{\leq}_{1,n} |^{\eta}  \sum_{i=1}^{n} \mathrm{E} | C''_{q_{n}}|^{\eta} \\[0.4em]
   &=& 2 \Big( \frac{\epsilon}{4} \Big)^{-\eta} \Big( \frac{\eta}{\eta-1} \Big)^{\eta} n\mathrm{E} | \widetilde{Z}^{\leq}_{1,n}|^{\eta} \mathrm{E}| C''_{q_{n}}|^{\eta}.
\end{eqnarray*}
 Using the inequality $|a-b|^{\eta} \leq 2^{\eta} (|a|^{\eta} + |b|^{\eta})$ and a special case of Jensen's inequality
 $$ (\mathrm{E}|Y|)^{\kappa} \leq \mathrm{E}|Y|^{\kappa}$$
 (which holds for $\kappa \geq 1$)
  we have
 \begin{equation}\label{e:jensen}
 \mathrm{E}|\widetilde{Z}^{\leq}_{1,n}|^{\eta} \leq 2^{\eta} [ \mathrm{E}|Z^{\leq}_{1,n}|^{\eta} + ( \mathrm{E} |Z^{\leq}_{1,n}| )^{\eta} ] \leq 2^{\eta +1} \mathrm{E}|Z^{\leq}_{1,n}|^{\eta},
\end{equation}
and hence
$$ I_{11} \leq 2^{2-\eta} \epsilon^{-\eta} \Big( \frac{\eta}{\eta-1} \Big)^{\eta} n\mathrm{E} |Z^{\leq}_{1,n}|^{\eta} \mathrm{E}| C''_{q_{n}}|^{\eta}.$$
Note that
\begin{equation}\label{e:AT-tc}
\mathrm{E}| C''_{q_{n}}|^{\eta} = \mathrm{E} (|C''_{q_{n}}|^{\eta-\delta} \cdot |C''_{q_{n}}|^{\delta}) \leq \mathrm{E} \bigg[ \bigg( \sum_{i=q_{n}+1}^{\infty} |C_{i}|
 \bigg)^{\eta-\delta} \sum_{j=q_{n}+1}^{\infty}|C_{j}|^{\delta} \bigg],
\end{equation}
and thus
condition (\ref{e:vod1}) yields $\lim_{n \to \infty}\mathrm{E}| C''_{q_{n}}|^{\eta}=0$. This and the fact that $ \lim_{n \to \infty} n \mathrm{E}|Z^{\leq}_{1,n}|^{\eta} = \alpha/(\eta - \alpha)$ (which holds by Karamata's theorem) allows us to conclude that
$ \lim_{n \to \infty}I_{11}=0$.

For $I_{12}$ by Markov's inequality we obtain
$$ I_{12} \leq \Pr \bigg( \sum_{i=1}^{n} |C''_{q_{n}} \widetilde{Z}^{>}_{i-q_{n},n}| > \frac{\epsilon}{4} \bigg) \leq
\Big( \frac{\epsilon}{4} \Big)^{-1} \mathrm{E}|\widetilde{Z}^{>}_{1,n}| \sum_{i=1}^{n} \mathrm{E} |C''_{q_{n}}|.$$
Since $\widetilde{Z}^{>}_{i,n} = Z^{>}_{i,n} - \mathrm{E} Z^{>}_{i,n}$, it holds that
\begin{equation}\label{e:AT-tc4}
\mathrm{E}|\widetilde{Z}^{>}_{1,n}| \leq  \mathrm{E}|Z^{>}_{1,n}| + |\mathrm{E} Z^{>}_{1,n}| \leq 2 \mathrm{E}|Z^{>}_{1,n}|.
\end{equation}
Therefore
$$ I_{12} \leq 8 \epsilon^{-1}  n\mathrm{E} |Z^{>}_{1,n}| \sum_{j=q_{n}+1}^{\infty} \mathrm{E}| C_{j}|,$$
yielding $\lim_{n \to \infty}I_{12}=0$, since by Karamata's theorem  $ \lim_{n \to \infty} n\mathrm{E}|Z^{>}_{1,n}| = \alpha/(\alpha - 1)$ and we assumed (\ref{eq:convcond}) holds with $\delta=1$ in this case.
Thus
\begin{equation}\label{e:I1}
\lim_{n \to \infty}I_{1}=0.
\end{equation}

Now we consider $I_{2}$. Note that
\begin{eqnarray*}
  I_{2} & \leq & \Pr \bigg( \sup_{0 \leq t \leq 1} \bigg| \sum_{i=1}^{\lfloor nt \rfloor} \sum_{j=q_{n}+1}^{\infty} C_{j} \widetilde{Z}^{\leq}_{i-j,n} \bigg| > \frac{\epsilon}{4} \bigg) + \Pr \bigg( \sup_{0 \leq t \leq 1} \bigg| \sum_{i=1}^{\lfloor nt \rfloor} \sum_{j=q_{n}+1}^{\infty} C_{j} \widetilde{Z}^{>}_{i-j,n} \bigg| > \frac{\epsilon}{4} \bigg)\\[0.4em]
  &=:& I_{21}+I_{22}.
\end{eqnarray*}
Let
$$ W_{n}(t) =  \sum_{i=1}^{\lfloor nt \rfloor} \sum_{j=q_{n}+1}^{\infty} C_{j} \widetilde{Z}^{\leq}_{i-j,n}, \qquad t \in [0,1].$$
Take $0 \leq t_{1} < t_{2} \leq 1$, and consider (for $\rho >0$)
\begin{eqnarray*}\label{e:AT1}
 \nonumber \Pr ( |W_{n}(t_{2}) - W_{n}(t_{1})| > \rho) &=& \Pr \bigg( \bigg| \sum_{i=\lfloor nt_{1} \rfloor +1}^{\lfloor n t_{2} \rfloor} \sum_{j=q_{n}+1}^{\infty} C_{j} \widetilde{Z}^{\leq}_{i-j,n} \bigg| > \rho \bigg) \\[0.5em]
   &=&  \Pr \bigg( \bigg| \sum_{i=-\infty}^{\lfloor n t_{2} \rfloor -1} \widetilde{D}^{n, t_{1},t_{2}}_{i-q_{n}} \widetilde{Z}^{\leq}_{i-q_{n},n} \bigg| > \rho \bigg),
\end{eqnarray*}
where
$$  \widetilde{D}^{n, t_{1},t_{2}}_{i-q_{n}} =  \left\{ \begin{array}{cc}
                                   \displaystyle \sum_{j=q_{n}-i+ \lfloor nt_{1} \rfloor +1}^{q_{n}-i + \lfloor nt_{2} \rfloor}C_{j}, & \quad i \leq \lfloor nt_{1} \rfloor -1,\\[1.5em]
                                   \displaystyle \sum_{j=q_{n} +1}^{q_{n}-i + \lfloor nt_{2} \rfloor}C_{j}, & \quad i=\lfloor nt_{1} \rfloor, \ldots, \lfloor nt_{2} \rfloor -1,
                                 \end{array}\right.$$
and the last equality in (\ref{e:AT1}) follows by standard changes of variables and order of summation.
The sequence $(\widetilde{D}^{n, t_{1},t_{2}}_{i-q_{n}}\widetilde{Z}^{\leq}_{i-q_{n},n})_{i}$ is a martingale difference sequence, and hence the Bahr-Esseen inequality (which holds also for infinite sums, by the Fatou lemma) and Markov's inequality imply
\begin{eqnarray*}
  \Pr \bigg( \bigg| \sum_{i=-\infty}^{\lfloor n t_{2} \rfloor -1} \widetilde{D}^{n, t_{1},t_{2}}_{i-q_{n}} \widetilde{Z}^{\leq}_{i-q_{n},n} \bigg| > \rho \bigg) & \leq &  \rho^{-\eta} \Big( \frac{\eta}{\eta -1} \Big)^{\eta}  \sum_{i=-\infty}^{\lfloor n t_{2} \rfloor -1} \mathrm{E}|\widetilde{D}^{n,t_{1},t_{2}}_{i-q_{n}} \widetilde{Z}^{\leq}_{i-q_{n},n}|^{\eta} \\
   &=& \rho^{-\eta} \Big( \frac{\eta}{\eta -1} \Big)^{\eta} \mathrm{E} | \widetilde{Z}^{\leq}_{1,n}|^{\eta} \sum_{i=-\infty}^{\lfloor n t_{2} \rfloor -1} \mathrm{E}| \widetilde{D}^{n,t_{1},t_{2}}_{i-q_{n}}|^{\eta}.
\end{eqnarray*}
With the same argument as in (\ref{e:AT-tc}) we obtain
\begin{eqnarray}\label{e:AT-tc2}
  \nonumber \sum_{i= \lfloor nt_{1} \rfloor}^{\lfloor nt_{2} \rfloor -1}\mathrm{E}| \widetilde{D}^{n,t_{1},t_{2}}_{i-q_{n}}|^{\eta} & & \\[0.5em]
 \nonumber & \hspace*{-12em} \leq & \hspace*{-6em} \sum_{i= \lfloor nt_{1} \rfloor}^{\lfloor nt_{2} \rfloor -1} \mathrm{E} \bigg[ \bigg( \sum_{s=q_{n}+1}^{\infty} |C_{s}| \bigg)^{\eta-\delta} \sum_{j=q_{n}+1}^{\infty}|C_{j}|^{\delta} \bigg] \\[0.5em]
   & \hspace{-12em} = & \hspace*{-6em} ( \lfloor nt_{2} \rfloor - \lfloor nt_{1} \rfloor ) \mathrm{E} \bigg[ \bigg( \sum_{s=q_{n}+1}^{\infty} |C_{s}| \bigg)^{\eta-\delta} \sum_{j=q_{n}+1}^{\infty}|C_{j}|^{\delta} \bigg],
\end{eqnarray}
and
\begin{eqnarray}\label{e:AT-tc3}
  \nonumber \sum_{i= -\infty}^{\lfloor nt_{1} \rfloor -1} \mathrm{E}| \widetilde{D}^{n,t_{1}¸,t_{2}}_{i-q_{n}}|^{\eta} & &\\[0.5em]
  \nonumber  & \hspace*{-12em} \leq & \hspace*{-6em} \mathrm{E} \bigg[ \bigg( \sum_{s=q_{n}+1}^{\infty} |C_{s}| \bigg)^{\eta-\delta} \sum_{i= -\infty}^{\lfloor nt_{1} \rfloor -1} \sum_{j=q_{n}-i +\lfloor nt_{1} \rfloor +1}^{q_{n}-i+ \lfloor nt_{2} \rfloor}|C_{j}|^{\delta} \bigg] \\[0.5em]
   & \hspace*{-12em} \leq & \hspace{-6em} ( \lfloor nt_{2} \rfloor - \lfloor nt_{1} \rfloor ) \mathrm{E} \bigg[ \bigg( \sum_{s=q_{n}+1}^{\infty} |C_{s}| \bigg)^{\eta-\delta} \sum_{j=q_{n}+1}^{\infty}|C_{j}|^{\delta} \bigg],
\end{eqnarray}
where the last inequality follows from the fact that every $|C_{j}|^{\delta}$, for $j \geq q_{n}+1$, appears in the sum $\sum_{i= -\infty}^{\lfloor nt_{1} \rfloor -1} \sum_{j=q_{n}-i +\lfloor nt_{1} \rfloor +1}^{q_{n}-i+ \lfloor nt_{2} \rfloor}|C_{j}|^{\delta}$ at most $\lfloor nt_{2} \rfloor - \lfloor nt_{1} \rfloor$ times. Therefore
\begin{eqnarray*}
   \Pr \bigg( \bigg| \sum_{i=-\infty}^{\lfloor n t_{2} \rfloor -1} \widetilde{D}^{n,t_{1},t_{2}}_{i-q_{n}} \widetilde{Z}^{\leq}_{i-q_{n},n} \bigg| > \rho \bigg) &&  \\
   & \hspace*{-26em} \leq & \hspace*{-13em} 2 \rho^{-\eta} \Big( \frac{\eta}{\eta -1} \Big)^{\eta} n \mathrm{E} | \widetilde{Z}^{\leq}_{1,n}|^{\eta}  \frac{\lfloor nt_{2} \rfloor - \lfloor nt_{1} \rfloor}{n} \mathrm{E} \bigg[ \bigg( \sum_{s=q_{n}+1}^{\infty} |C_{s}| \bigg)^{\eta-\delta} \sum_{j=q_{n}+1}^{\infty}|C_{j}|^{\delta} \bigg].
\end{eqnarray*}
Since by (\ref{e:jensen}) and Karamata's theorem $\sup_{n} \{ n \mathrm{E} | \widetilde{Z}^{\leq}_{1,n}|^{\eta} \} < \infty$, and $(\lfloor nt_{2} \rfloor - \lfloor nt_{1} \rfloor)/n \leq 2(t_{2}-t_{1})$ for large $n$, it follows that
$$  \Pr \bigg( \bigg| \sum_{i=-\infty}^{\lfloor n t_{2} \rfloor -1} \widetilde{D}^{n,t_{1},t_{2}}_{i-q_{n}} \widetilde{Z}^{\leq}_{i-q_{n},n} \bigg| > \rho \bigg) \leq M \rho^{-\eta} (t_{2}-t_{1}) \mathrm{E} \bigg[ \bigg( \sum_{s=q_{n}+1}^{\infty} |C_{s}| \bigg)^{\eta-\delta} \sum_{j=q_{n}+1}^{\infty}|C_{j}|^{\delta} \bigg],$$
for some constant $M$ independent of $n$. Now by Theorem 2 in Avram and Taqqu~\cite{AvTa89} and the arguments in the proof of Proposition 4 in Avram and Taqqu~\cite{AvTa92} we conclude that
\begin{eqnarray*}
  \Pr \bigg( \sup_{0 \leq t \leq 1}\bigg| \sum_{i=-\infty}^{ \lfloor nt \rfloor -1} \widetilde{D}^{n,0,t}_{i-q_{n}} \widetilde{Z}^{\leq}_{i-q_{n},n} \bigg| > \rho \bigg) &&  \\[0.5em]
   & \hspace*{-22em} \leq & \hspace*{-11em} M' \rho^{-\eta} (\ln n)^{1+\eta}\mathrm{E} \bigg[ \bigg( \sum_{s=q_{n}+1}^{\infty} |C_{s}| \bigg)^{\eta-\delta} \sum_{j=q_{n}+1}^{\infty}|C_{j}|^{\delta} \bigg]
\end{eqnarray*}
for some constant $M'$ independent of $n$. From this and condition (\ref{e:vod1}), since $\ln n = O(\ln q_{n})$, it follows that
\begin{equation}\label{e:AT-tc5}
 \lim_{n \to \infty} I_{21} = \lim_{n \to \infty} \Pr \bigg( \sup_{0 \leq t \leq 1}\bigg| \sum_{i=-\infty}^{ \lfloor nt \rfloor -1} \widetilde{D}^{n,0,t}_{i-q_{n}} \widetilde{Z}^{\leq}_{i-q_{n},n} \bigg| > \frac{\epsilon}{4} \bigg)=0.
\end{equation}
Further, note that
\begin{eqnarray*}
   \Pr \bigg( \sup_{0 \leq t \leq 1} \bigg| \sum_{i=1}^{\lfloor nt \rfloor} \sum_{j=q_{n}+1}^{\infty} C_{j} \widetilde{Z}^{>}_{i-j,n} \bigg| > \frac{\epsilon}{4} \bigg) & \leq & \Pr \bigg( \sum_{i=1}^{n} \sum_{j=q_{n}+1}^{\infty} |C_{j} \widetilde{Z}^{>}_{i-j,n}| > \frac{\epsilon}{4} \bigg)  \\[0.5em]
  & \leq &  \Big( \frac{\epsilon}{4} \Big)^{-1} \sum_{i=1}^{n} \sum_{j=q_{n}+1}^{\infty} \mathrm{E}|C_{j} \widetilde{Z}^{>}_{i-j,n}|\\[0.5em]
  & = &  \Big( \frac{\epsilon}{4} \Big)^{-1} \mathrm{E}|\widetilde{Z}^{>}_{1,n}| \sum_{i=1}^{n} \sum_{j=q_{n}+1}^{\infty} \mathrm{E}|C_{j}|\\[0.5em]
   & = &  \Big( \frac{\epsilon}{4} \Big)^{-1} n \mathrm{E}|\widetilde{Z}^{>}_{1,n}| \sum_{j=q_{n}+1}^{\infty} \mathrm{E}|C_{j}|.
\end{eqnarray*}
By (\ref{e:AT-tc4}) and Karamata's theorem $\sup_{n} \{ n \mathrm{E} | \widetilde{Z}^{>}_{1,n}|^{\eta} \} < \infty$, and hence condition (\ref{eq:convcond}) (with $\delta=1$) implies
\begin{equation}\label{e:AT-tc6}
\lim_{n \to \infty} I_{22} =0.
\end{equation}
Now from (\ref{e:AT-tc5}) and (\ref{e:AT-tc6}) we get
\begin{equation}\label{e:I2}
 \lim_{n \to \infty} I_{2} =0.
\end{equation}
Therefore from (\ref{e:aproksqn}), (\ref{e:I1}) and (\ref{e:I2}) we conclude that
$$ \lim_{n \to \infty} \Pr \bigg( \sup_{0 \leq t \leq 1}|V_{n, q_{n}}(t) - V_{n}(t)|> \epsilon \bigg)=0.$$

Thus, in order to have $V_{n}(\,\cdot\,) \dto \widetilde{B}(\,\cdot\,) V(\,\cdot\,)$ in $D[0,1]$ with the $M_{2}$ topology, according to Slutsky's theorem (see Resnick~\cite{Resnick07}, Theorem 3.4), it remains to show
$V_{n, q_{n}}(\,\cdot\,) \dto \widetilde{B}(\,\cdot\,) V(\,\cdot\,)$ in $(D[0,1], d_{M_{2}})$ as $n \to \infty$. Note that we cannot simply use Theorem~\ref{t:FinMA} as we did in the case $\alpha \in (0,1)$, since now $q_{n}$ depends on $n$. By careful analysis of the proof of Theorem~\ref{t:FinMA} we see that relations that have to be checked, in order that the statement of Theorem~\ref{t:FinMA} remains valid if we replace $q$ by $q_{n}$, are (\ref{eq:setBn1}), (\ref{eq:setBn3}) and (\ref{eq:Tnend}) (with $C_{*} = \sum_{s=0}^{\infty}|C_{s}|$). Hence we have to establish the following relations
\begin{eqnarray*}
  & & \lim_{n \to \infty} (q_{n}-1)(2q_{n}-1) \Pr \bigg( \frac{|Z_{0}|}{a_{n}} > \frac{\epsilon}{3(2q_{n}-1)M} \bigg) =0   \\[0.8em]
   && \lim_{n \to \infty} \bigg[ q_{n} \Pr \bigg( \frac{|Z_{0}|}{a_{n}} > \frac{\epsilon}{2q_{n}M} \bigg) + nq_{n}^{2} \bigg( \Pr
            \bigg( \frac{|Z_{0}|}{a_{n}} > \frac{\epsilon}{2q_{n}M} \bigg) \bigg)^{2} \bigg]= 0 \\[0.8em]
 &&  \lim_{n \to \infty} n {q_{n} \choose 2}
 \bigg[ \Pr\left(
 \frac{|Z_{0}|}{a_{n}} >   \frac{\epsilon}{2 q_{n} M}
 \right)\bigg]^2 =0 ,
\end{eqnarray*}
for arbitrary $\epsilon >0$ and $M>0$.
For all of this, taking into consideration relation (\ref{e:regvar}), i.e. the regular variation property of $Z_{0}$, it suffices to show
$$ \lim_{n \to \infty} n q_{n}^{2} \bigg[\Pr \bigg( |Z_{0}| > \frac{a_{n}}{q_{n}} \bigg)\bigg]^{2}=0,$$
which holds by Lemma~\ref{l:tech} in Appendix.
Therefore we conclude $V_{n}(\,\cdot\,) \dto \widetilde{B}(\,\cdot\,) V(\,\cdot\,)$ in $(D[0,1], d_{M_{2}})$.
\\[-0.8em]

Case $\alpha =1$.
Since $Z_{i}$ is symmetric, note that $\widetilde{Z}^{\leq}_{i,n} = Z^{\leq}_{i,n}$ and $\widetilde{Z}^{>}_{i,n} = Z^{>}_{i,n}$. We proceed as in the case $\alpha \in (1,2)$ (with the notation from that case) to obtain
$\lim_{n \to \infty}I_{11}=0$ and $\lim_{n \to \infty}I_{21}=0$ . For $I_{12}$, by Markov's inequality and the triangle inequality $|\sum_{i=1}^{n}a_{i}|^{\gamma} \leq \sum_{i=1}^{n}|a_{i}|^{\delta}$ we obtain
$$ I_{12} \leq \Pr \bigg( \sum_{i=1}^{n} |C''_{q_{n}} \widetilde{Z}^{>}_{i-q_{n},n}| > \frac{\epsilon}{4} \bigg) \leq
\Big( \frac{\epsilon}{4} \Big)^{-\delta} n \mathrm{E}|Z^{>}_{1,n}|^{\delta} \sum_{i=q_{n}+1}^{\infty} \mathrm{E} |C_{j}|^{\delta}.$$
By Karamata's theorem  $ \lim_{n \to \infty} n\mathrm{E}|Z^{>}_{1,n}|^{\delta} = (1 - \delta)^{-1}$ and hence from (\ref{eq:convcond}) we have $\lim_{n \to \infty}I_{12}=0$. Similarly we obtain $\lim_{n \to \infty}I_{22}=0$. This all allows us to conclude $\lim_{n \to \infty}I_{1}=0$ and $\lim_{n \to \infty}I_{2}=0$, i.e.
$$ \lim_{n \to \infty} \Pr \bigg( \sup_{0 \leq t \leq 1}|V_{n, q_{n}}(t) - V_{n}(t)|> \epsilon \bigg)=0.$$ As before, Lemma~\ref{l:tech} from Appendix and the modified proof of Theorem~\ref{t:FinMA} (with $q$ replaced by $q_{n}$) imply $V_{n, q_{n}}(\,\cdot\,) \dto \widetilde{B}(\,\cdot\,) V(\,\cdot\,)$ in $(D[0,1], d_{M_{2}})$. Now the statement of the theorem follows by an application of Slutsky's theorem.

\end{proof}

\begin{rem}
When the sequence of coefficients $(C_{j})$ is deterministic, condition (\ref{e:vod1}) is not needed. This is known from the article of Basrak and Krizmani\'{c}~\cite{BaKr}, but their proof contain an error (i.e.~they used Lemma 2 from Avram and Taqqu~\cite{AvTa92}, but the conditions needed to use this lemma were not fulfilled). Therefore in the proposition below we improve the proof of Theorem 3.1 in Basrak and Krizmani\'{c}~\cite{BaKr} in the case $\alpha \in [1,2)$, thus showing that condition (\ref{e:vod1}) can be dropped if all coefficients of the moving average process are deterministic.

 For a deterministic sequence $(C_{j})$ condition (\ref{e:mod1}) can also be dropped since it is implied by (\ref{eq:convcond}). The latter in general does not hold when the coefficients $C_{j}$ are random. It can easily be seen by the following example. Take $\epsilon>0$ such that $\delta + \epsilon < \gamma$. Let $S= \sum_{j=1}^{\infty}j^{-(1+\delta+\epsilon)} < \infty$ and $S_{k}= S^{-1} \sum_{j=1}^{k}j^{-(1+\delta+\epsilon)}$, $k \in \mathbb{N}$ (with $S_{0}=0$).
Taking $\Pr$ to be the Lebesgue measure on the Borel subsets of $(0,1)$ and
$$C_{i}(\omega)=i\,1_{(S_{i-1},S_{i}]}(\omega), \qquad \omega \in (0,1), \ i \in \mathbb{N},$$
we obtain
$$ \sum_{i=1}^{\infty} \mathrm{E}|C_{i}|^{\delta}=  S^{-1} \sum_{i=1}^{\infty} i^{\delta} (S_{i}-S_{i-1}) = S^{-1} \sum_{i=1}^{\infty} \frac{1}{i^{1+\epsilon}} < \infty,$$
and
$$ \sum_{i=1}^{\infty} \mathrm{E}|C_{i}|^{\gamma}=  S^{-1} \sum_{i=1}^{\infty} i^{\gamma} (S_{i}-S_{i-1}) = S^{-1} \sum_{i=1}^{\infty} \frac{1}{i^{1+\delta +\epsilon - \gamma}} = \infty,$$
since $1+\delta+\epsilon-\gamma < 1$.
\end{rem}

\begin{prop}
Let $(Z_{i})_{i \in \mathbb{Z}}$ be an i.i.d.~sequence of regularly varying random variables with index $\alpha \in [1,2)$. Suppose that conditions $(\ref{e:oceknula})$ and $(\ref{e:sim})$ hold. Let $\{C_{i}, i=0,1,2,\ldots\}$ be a sequence of real numbers satisfying
\be\label{eq:convcond-prop}
 \sum_{j=0}^{\infty}|C_{j}|^{\delta} < \infty \qquad \textrm{for some} \ \delta < \alpha, \ 0 < \delta \leq 1,
\ee
 and
\be\label{eq:InfiniteMAcond-prop}
0 \le \sum_{i=0}^{s}C_{i} \Bigg/ \sum_{i=0}^{\infty}C_{i} \le 1 \qquad \textrm{for every} \ s=0, 1, 2 \ldots.
\ee
  Then
$$ V_{n}(\,\cdot\,) \dto C V(\,\cdot\,), \qquad n \to \infty,$$
in $D[0,1]$ endowed with the $M_{2}$ topology, where $C=\sum_{j=0}^{\infty}C_{j}$, $V$ is an $\alpha$--stable L\'{e}vy process with characteristic triple $(0, \mu, b)$, with $\mu$ as in $(\ref{eq:mu})$ and
$$ b = \left\{ \begin{array}{cc}
                                   0, & \quad \alpha = 1,\\[0.4em]
                                   (p-r)\frac{\alpha}{1-\alpha}, & \quad \alpha \in (1,2).
                                 \end{array}\right.$$
\end{prop}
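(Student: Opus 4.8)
The plan is to follow the architecture of the proof of Theorem~\ref{t:InfMA} in the case $\alpha\in[1,2)$ and to locate, and then remove, every use of condition~(\ref{e:vod1}). For $q\in\mathbb{N}$ put $X_{i}^{q}=\sum_{j=0}^{q-1}C_{j}Z_{i-j}+C_{q}'Z_{i-q}$ with $C_{q}'=\sum_{i\ge q}C_{i}$, and let $V_{n,q}$ be its partial-sum process. Since the coefficients $C_{0},\ldots,C_{q-1},C_{q}'$ satisfy (\ref{eq:FiniteMAcond}) and sum to $C$, Theorem~\ref{t:FinMA} gives $V_{n,q}(\,\cdot\,)\dto CV(\,\cdot\,)$ in $(D[0,1],d_{M_{2}})$ as $n\to\infty$ (with $\widetilde{B}\equiv C$, the coefficients being deterministic). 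By the generalized Slutsky theorem (Theorem 3.5 in Resnick~\cite{Resnick07}) it then suffices to establish
\[
\lim_{q\to\infty}\;\limsup_{n\to\infty}\;\Pr\bigl[\,d_{M_{2}}(V_{n,q},V_{n})>\epsilon\,\bigr]=0\qquad(\epsilon>0),
\]
and, as $d_{M_{2}}$ is dominated by the uniform metric, it is enough to bound $\Pr\bigl(\sup_{0\le t\le1}|V_{n,q}(t)-V_{n}(t)|>\epsilon\bigr)$, where $V_{n,q}(t)-V_{n}(t)=a_{n}^{-1}\sum_{i=1}^{\floor{nt}}\bigl(C_{q}''Z_{i-q}+\sum_{j>q}C_{j}Z_{i-j}\bigr)$ with $C_{q}''=\sum_{j>q}C_{j}$.

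The estimate of this error is the content of the proof, and it runs along the lines of the estimates of $I_{1}$ and $I_{2}$ in the proof of Theorem~\ref{t:InfMA}: one decomposes $Z_{i}/a_{n}=\widetilde{Z}^{\le}_{i,n}+\widetilde{Z}^{>}_{i,n}$, treats the heavy part $\widetilde{Z}^{>}$ by Markov's inequality with a first moment ($\alpha\in(1,2)$) or a $\delta$-th moment ($\alpha=1$) — legitimate since (\ref{eq:convcond-prop}) with $\delta\le1$ forces $\sum_{j}|C_{j}|<\infty$, whence $C_{q}''\to0$, and only the Karamata asymptotics $\sup_{n}\{n\,\mathrm{E}|\widetilde{Z}^{>}_{1,n}|^{\delta}\}<\infty$ are used — and treats the bounded centred part $\widetilde{Z}^{\le}$ with Doob's and the Bahr--Esseen inequalities on the martingale differences $C_{q}''\widetilde{Z}^{\le}_{i-q,n}$ and $C_{j}\widetilde{Z}^{\le}_{i-j,n}$, the $j$-sum via the maximal inequality of Theorem 2 in Avram and Taqqu~\cite{AvTa89} and the argument of Proposition 4 in Avram and Taqqu~\cite{AvTa92}. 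In the proof of Theorem~\ref{t:InfMA} that step produces a bound of order $(\ln n)^{1+\eta}\bigl(\sum_{j>q}|C_{j}|\bigr)^{\eta-\delta}\sum_{j>q}|C_{j}|^{\delta}$ for a fixed $\eta>\alpha$, and it is exactly the factor $(\ln n)^{1+\eta}$ that condition~(\ref{e:vod1}) was there to absorb (besides forcing $\mathrm{E}|C_{q_{n}}''|^{\eta}\to0$, which is now automatic). The point is that, the coefficients being deterministic, the truncation level may be chosen as a function of $n$ and of the fixed sequence $(C_{j})$: using $\bigl(\sum_{j>q}|C_{j}|\bigr)^{\eta-\delta}\le\bigl(\sum_{j>q}|C_{j}|^{\delta}\bigr)^{(\eta-\delta)/\delta}$ ($\delta\le1<\eta$), the Avram--Taqqu bound is at most $(\ln n)^{1+\eta}\bigl(\sum_{j>q}|C_{j}|^{\delta}\bigr)^{\eta/\delta}$; moreover remote blocks of coefficients can alternatively be disposed of by the crude deterministic estimate $\sum_{i=1}^{n}\sum_{j>N}|C_{j}|\,|\widetilde{Z}^{\le}_{i-j,n}|\le 2n\sum_{j>N}|C_{j}|$ together with Markov and a $\delta$-th moment for the heavy part, neither of which involves a maximal inequality or a $\ln n$, the price being that $N$ must be taken large relative to $(C_{j})$. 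Condition~(\ref{eq:InfiniteMAcond-prop}) plays the same structural role as in Theorem~\ref{t:FinMA}, keeping the partial sums $\sum_{i=0}^{s}C_{i}$ between $0$ and $C$, so that the completed graph of $V_{n}$ never leaves the ladder spanned by the jumps of $CV_{n}^{Z}$.

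The step I expect to be the main obstacle is reconciling these scales. Removing the $(\ln n)^{1+\eta}$ factor forces the truncation level $q_{n}$ to grow quickly enough that $\sum_{j>q_{n}}|C_{j}|^{\delta}$ overtakes $(\ln n)^{1+\eta}$ — a rate governed by the decay of $\sum_{j>m}|C_{j}|^{\delta}$, which under (\ref{eq:convcond-prop}) may be arbitrarily slow — whereas rerunning the proof of Theorem~\ref{t:FinMA} with $q\rightsquigarrow q_{n}$ requires the relations (\ref{eq:setBn1}), (\ref{eq:setBn3}), (\ref{eq:Tnend}) to survive, which reduce, by regular variation, to $n\,q_{n}^{2}\,[\Pr(|Z_{0}|>a_{n}/q_{n})]^{2}\to0$, i.e.\ to Lemma~\ref{l:tech} of the Appendix, allowing $q_{n}$ to grow only like a fixed power of $n$. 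I would break the impasse with a multi-level truncation — a slow main cutoff $q_{n}$ admissible for Lemma~\ref{l:tech}, handling $V_{n,q_{n}}$; a very large remote cutoff $N_{n}$ with $\sum_{j>N_{n}}|C_{j}|$ negligible against $a_{n}/n$, handling $j>N_{n}$ by the crude bounds above; and the intermediate block $q_{n}<j\le N_{n}$ via the Avram--Taqqu inequality, the $(\ln n)^{1+\eta}$ factor being killed after a secondary split that makes the relevant coefficient tail small — all combined with the $\delta$-th-moment estimates of the proof of Theorem~\ref{t:FinMA} at the dyadic grid points, which for deterministic coefficients carry no logarithmic loss. Verifying that such a choice of cutoffs can be made using only $\sum_{j}|C_{j}|^{\delta}<\infty$ is the technical heart, and is exactly the gap that the argument of Basrak and Krizmani\'{c}~\cite{BaKr} bridged by an unjustified appeal to Lemma 2 of Avram and Taqqu~\cite{AvTa92}. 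For $\alpha=1$ the same programme applies with $\widetilde{Z}^{\le}_{i,n}=Z^{\le}_{i,n}$ and $\widetilde{Z}^{>}_{i,n}=Z^{>}_{i,n}$ by symmetry, the heavy part estimated throughout by Markov's inequality with a $\delta$-th moment as in the case $\alpha=1$ of the proof of Theorem~\ref{t:InfMA}.
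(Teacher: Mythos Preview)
Your plan recycles the $q_{n}\to\infty$ architecture of Theorem~\ref{t:InfMA} and then tries to beat the $(\ln n)^{1+\eta}$ loss coming from the Avram--Taqqu maximal inequality by a multi-level truncation. You correctly identify this as the crux, but you do not actually close it: the secondary split that would ``kill'' the logarithm is not specified, and the condition you would need on $N_{n}$ (namely $n\sum_{j>N_{n}}|C_{j}|/a_{n}\to 0$) is not implied by $\sum_{j}|C_{j}|^{\delta}<\infty$ for an arbitrary sequence $(C_{j})$. As written the proposal has a genuine gap at exactly the point you flag.

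The paper's argument is different and avoids all of this. It works with a \emph{fixed} $q$, so neither Lemma~\ref{l:tech} nor any reconciliation of scales is needed. For the term $\sup_{k}\bigl|\sum_{i\le k}C_{q}''Z_{i-q}/a_{n}\bigr|$ it applies Lemma~2 of Avram and Taqqu~\cite{AvTa92} directly: the hypothesis of that lemma that was missing in~\cite{BaKr} is simply $|C_{q}''|<1$, which holds for all large $q$ because $\sum_{j}|C_{j}|<\infty$ (a consequence of~(\ref{eq:convcond-prop})). For the remaining term $\sup_{k}\bigl|\sum_{i\le k}\sum_{j>q}C_{j}Z_{i-j}/a_{n}\bigr|$ the key device---which you do not mention---is to split $C_{j}=C_{j}^{+}-C_{j}^{-}$ and observe that with nonnegative coefficients the processes $Y_{i}=\sum_{j>q}C_{j}^{+}\widetilde Z^{\le}_{i-j,n}$ are \emph{associated} (nondecreasing functions of independent variables), so their partial sums form a \emph{demimartingale}; the Doob-type maximal inequality for demimartingales then gives a bound of the form $M\,n\mathrm{E}|Z^{\le}_{1,n}|^{\alpha+\tau}\sum_{j>q}C_{j}^{+}$ with no logarithmic factor. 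The heavy part $\widetilde Z^{>}$ is handled by a first moment ($\alpha\in(1,2)$) or, since $\alpha-\tau<1$, by the elementary $\ell^{\alpha-\tau}$ bound and Markov ($\alpha=1$). After Karamata one lets $q\to\infty$ and uses only $\sum_{j}|C_{j}|^{\delta}<\infty$.

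In short: the missing idea is association/demimartingale maximal inequalities in place of the Avram--Taqqu~\cite{AvTa89} bound; once you have it, a fixed $q$ suffices and the whole $q_{n}$/multi-cutoff machinery can be dropped.
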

\begin{proof}
Fix $q \in \mathbb{N}$, and define
$$ X_{i}^{q} = \sum_{j=0}^{q-1}C_{j}Z_{i-j} + C'_{q} Z_{i-q}, \qquad i \in \mathbb{Z},$$
where $C'_{q}= \sum_{j=q}^{\infty}C_{j}$,
and
$$ V_{n, q}(t) = \sum_{i=1}^{\floor{nt}} \frac{X_{i}^{q}}{a_{n}}, \qquad t \in [0,1].$$
  Since the coefficients $C_{0}, \ldots, C_{q-1}, C'_{q}$ satisfy condition $(\ref{eq:FiniteMAcond})$, an application of Theorem~\ref{t:FinMA}, adjusted to deterministic coefficients $C_{j}$, yields $V_{n, q}(\,\cdot\,) \dto C V(\,\cdot\,)$ in $(D[0,1], d_{M_{2}})$ as $n \to \infty$ (see also Theorem 2.1 in Basrak and Krizmani\'{c}~\cite{BaKr}). Therefore, in order to have $V_{n}(\,\cdot\,) \dto C V(\,\cdot\,)$ in $D[0,1]$ with the $M_{2}$ topology, by a generalization of Slutsky's theorem we have to show that for every $\epsilon >0$
$$ \lim_{q \to \infty} \limsup_{n \to \infty}\Pr[d_{M_{2}}(V_{n, q}, V_{n})> \epsilon]=0.$$
As before it suffices to show that
\begin{equation}\label{e:prop0}
 \lim_{q \to \infty} \limsup_{n \to \infty}\Pr \bigg( \sup_{0 \leq t \leq 1}|V_{n, q}(t) - V_{n}(t)|> \epsilon \bigg)=0.
\end{equation}
As in (\ref{e:aproksqn}) we have
\begin{eqnarray}\label{e:aproksqn-prop}
  \nonumber \Pr \bigg( \sup_{0 \leq t \leq 1}|V_{n, q}(t) - V_{n}(t)|> \epsilon \bigg) &&  \\[0.5em]
   & \hspace*{-36em} \leq & \hspace*{-18em}  \Pr \bigg( \sup_{1 \leq k \leq n} \bigg| \sum_{i=1}^{k} \frac{C''_{q} Z_{i-q}}{a_{n}} \bigg| > \frac{\epsilon}{2} \bigg) + \Pr \bigg( \sup_{1 \leq k \leq n} \bigg| \sum_{i=1}^{k} \sum_{j=q+1}^{\infty} \frac{C_{j} Z_{i-j}}{a_{n}} \bigg| > \frac{\epsilon}{2} \bigg),
\end{eqnarray}
where $C''_{q}=\sum_{i=q+1}^{\infty}C_{i}$. Take $\tau >0$ such that
$$  \left\{ \begin{array}{cc}
                                   \alpha - \tau > \delta, & \quad \textrm{if} \ \alpha = 1,\\[0.4em]
                                   \alpha - \tau > 1, & \quad \textrm{if} \ \alpha \in (1,2).
                                 \end{array}\right.$$
  By (\ref{eq:convcond-prop}) we see that for large $q$ it holds that $|C_{j}|^{\delta} < 1$ for all $j \geq q+1$, which implies $|C_{j}|^{\alpha-\tau} \leq |C_{j}|^{\delta}$. Hence $\sum_{j=0}^{\infty}|C_{j}|^{\alpha-\tau} < \infty$. Similarly $\sum_{j=0}^{\infty}|C_{j}| < \infty$. This implies that for large $q$ we have $|C''_{q}| < 1$, which allows us to apply Lemma 2 in Avram and Taqqu~\cite{AvTa92} to the first term on the right-hand side of (\ref{e:aproksqn-prop}), to obtain (for large $q$)
  \begin{eqnarray}\label{e:prop1}
   \nonumber \Pr \bigg( \sup_{1 \leq k \leq n} \bigg| \sum_{i=1}^{k} \frac{C''_{q} Z_{i-q}}{a_{n}} \bigg| > \frac{\epsilon}{2} \bigg) & \leq & M \Big( \frac{\epsilon}{2} \Big)^{-(\alpha+\tau)} \frac{1}{n} \sum_{i=1}^{n}|C''_{q}|^{\alpha-\tau} \\[0.5em]
     &=& M \Big( \frac{\epsilon}{2} \Big)^{-(\alpha+\tau)} |C''_{q}|^{\alpha-\tau}
  \end{eqnarray}
where $M$ is a constant independent of $n$ and $q$. Using the following inequalities
\begin{equation}\label{e:ineq}
 \left\{ \begin{array}{cl}
 | \sum_{i=1}^{m}a_{i}|^{s} \leq \sum_{i=1}^{m}|a_{i}|^{s}, &  \qquad \textrm{if} \ s \leq 1,\\[0.6em]
 |\sum_{i=1}^{m}a_{i}|^{s} \leq \sum_{i=1}^{m}|a_{i}|, & \qquad \textrm{if} \ s>1 \ \textrm{and} \ | \sum_{i=1}^{m} a_{i}| < 1,
\end{array}\right.
\end{equation}
we have
$$  |C''_{q}|^{\alpha-\tau} \leq
 \left\{ \begin{array}{lc}
  \sum_{j=q+1}^{\infty}|C_{j}|^{\alpha-\tau} , &  \qquad \textrm{if} \ \alpha =1,\\[0.7em]
  \sum_{j=q+1}^{\infty}|C_{j}|, & \qquad \textrm{if} \ \alpha \in (1,2),
\end{array}\right.$$
yielding $\lim_{q \to \infty}|C''_{q}|^{\alpha-\tau} =0$.
Now from (\ref{e:prop1}) we obtain
\begin{equation}\label{e:prop1-0}
 \lim_{q \to \infty} \limsup_{n \to \infty} \Pr \bigg( \sup_{1 \leq k \leq n} \bigg| \sum_{i=1}^{k} \frac{C''_{q} Z_{i-q}}{a_{n}} \bigg| > \frac{\epsilon}{2} \bigg) =0.
\end{equation}
Note that the second term on the right-hand side of (\ref{e:aproksqn-prop}) is bounded above by
$$ \Pr \bigg( \sup_{1 \leq k \leq n} \bigg| \sum_{i=1}^{k} \sum_{j=q+1}^{\infty} \frac{C_{j}^{+} Z_{i-j}}{a_{n}} \bigg| > \frac{\epsilon}{4} \bigg) + \Pr \bigg( \sup_{1 \leq k \leq n} \bigg| \sum_{i=1}^{k} \sum_{j=q+1}^{\infty} \frac{C_{j}^{-} Z_{i-j}}{a_{n}} \bigg| > \frac{\epsilon}{4} \bigg),$$
where $C_{j}^{+} = C_{j} \vee 0$ and $C_{j}^{-}= (-C_{j}) \vee 0$. In the sequel we consider only the first of these two probabilities since the other one can be handled in the same manner.

Assume first $\alpha \in (1,2)$. Recall
$ \widetilde{Z}^{\leq}_{i,n}$ and
$\widetilde{Z}^{>}_{i,n}$ from the proof of Theorem~\ref{t:InfMA}, and note
\begin{eqnarray}\label{e:prop1-1}
  \nonumber \Pr \bigg( \sup_{1 \leq k \leq n} \bigg| \sum_{i=1}^{k} \sum_{j=q+1}^{\infty} \frac{C_{j}^{+} Z_{i-j}}{a_{n}} \bigg| > \frac{\epsilon}{4} \bigg) && \\[0.5em]
  \nonumber  & \hspace*{-32em} \leq & \hspace*{-16em} \Pr \bigg( \sup_{1 \leq k \leq n} \bigg| \sum_{i=1}^{k} \sum_{j=q+1}^{\infty} C_{j}^{+} \widetilde{Z}^{\leq}_{i-j,n} \bigg| > \frac{\epsilon}{8} \bigg) +  \Pr \bigg( \sup_{1 \leq k \leq n} \bigg| \sum_{i=1}^{k} \sum_{j=q+1}^{\infty} C_{j}^{+} \widetilde{Z}^{>}_{i-j,n} \bigg| > \frac{\epsilon}{8} \bigg)\\[-0.2em]
   & &
\end{eqnarray}
Since the coefficients $C_{j}^{+}$ are nonnegative, the moving average processes
$$Y_{i,n,q}^{\leq} := \sum_{j=q+1}^{\infty} C_{j}^{+} \widetilde{Z}^{\leq}_{i-j,n}, \qquad i=1,2,\ldots,$$
are associated, as nondecreasing functions of independent random variables (see Esary et al.~\cite{EsPrWa67}). Thus the sequence $( \sum_{i=1}^{k}Y_{i,n,q}^{\leq})_{k}$ is a demimartingale (see Section 2.1 in Prakasa Rao~\cite{Ra12}), and hence by Markov's inequality and the maximal inequality for demimartingales
$$ \mathrm{E} \bigg( \sup_{1 \leq k \leq n}|S_{k}| \bigg)^{\kappa} \leq \Big( \frac{\kappa}{\kappa-1} \Big)^{\kappa} \mathrm{E} |S_{n}|^{\kappa},$$
which holds for $\kappa >1$ and $(S_{k})_{k}$ a demimartingale
 (see for example Corollary 2.4 in Wang et al.~\cite{WHZY10})  we obtain
\begin{equation}\label{e:prop2}
 \Pr \bigg( \sup_{1 \leq k \leq n} \bigg| \sum_{i=1}^{k} \sum_{j=q+1}^{\infty} C_{j}^{+} \widetilde{Z}^{\leq}_{i-j,n} \bigg| > \frac{\epsilon}{8} \bigg) \leq \Big( \frac{\epsilon}{8} \Big)^{-(\alpha+\tau)} \Big( \frac{\alpha+\tau}{\alpha+\tau-1} \Big)^{\alpha+\tau} \mathrm{E} \bigg| \sum_{i=1}^{n}Y_{i,n,q}^{\leq} \bigg|^{\alpha+\tau}
\end{equation}
and similarly
\begin{equation}\label{e:prop3}
 \Pr \bigg( \sup_{1 \leq k \leq n} \bigg| \sum_{i=1}^{k} \sum_{j=q+1}^{\infty} C_{j}^{+} \widetilde{Z}^{>}_{i-j,n} \bigg| > \frac{\epsilon}{8} \bigg) \leq \Big( \frac{\epsilon}{8} \Big)^{-(\alpha-\tau)} \Big( \frac{\alpha-\tau}{\alpha-\tau-1} \Big)^{\alpha-\tau} \mathrm{E} \bigg| \sum_{i=1}^{n}Y_{i,n,q}^{>}\bigg|^{\alpha-\tau},
\end{equation}
where $Y_{i,n,q}^{>} := \sum_{j=q+1}^{\infty} C_{j}^{+} \widetilde{Z}^{>}_{i-j,n}$.
By standard changes of variables and order of summation we have
$$ \sum_{i=1}^{n}Y_{i,n,q}^{\leq} = \sum_{i=-\infty}^{n-1} \bigg( \sum_{j=q+1+(-i) \vee 0}^{q+n-i}C_{j}^{+} \bigg) \widetilde{Z}^{\leq}_{i-q,n}.$$
Note that $( ( \sum_{j=q+1+(-i) \vee 0}^{q+n-i}C_{j}^{+}) \widetilde{Z}^{\leq}_{i-q,n} )_{i}$ is a martingale difference sequence, and thus by the Bahr-Esseen inequality we obtain
\begin{eqnarray*}
  \mathrm{E} \bigg| \sum_{i=1}^{n}Y_{i,n,q}^{\leq} \bigg|^{\alpha+\tau} & \leq & 2 \sum_{i=-\infty}^{n-1} \bigg( \sum_{j=q+1+(-i) \vee 0}^{q+n-i}C_{j}^{+} \bigg)^{\alpha+\tau} \mathrm{E} |\widetilde{Z}^{\leq}_{i-q,n}|^{\alpha+\tau}.
\end{eqnarray*}
 Noting that for large $q$, $ \sum_{j=q+1+(-i) \vee 0}^{q+n-i}C_{j}^{+} < 1$, the second inequality in (\ref{e:ineq})
 yields that (for large $q$)
 $$\mathrm{E} \bigg| \sum_{i=1}^{n}Y_{i,n,q}^{\leq} \bigg|^{\alpha+\tau} \leq 2  |\widetilde{Z}^{\leq}_{1,n}|^{\alpha+\tau} \sum_{i=-\infty}^{n-1} \sum_{j=q+1+(-i) \vee 0}^{q+n-i}C_{j}^{+}.$$
 Now note that every $C_{j}^{+}$, for $j \geq q+1$, appears in the sum  $\sum_{i=-\infty}^{n-1} \sum_{j=q+1+(-i) \vee 0}^{q+n-i}C_{j}^{+}$ at most $n$ times, and therefore
  $$\mathrm{E} \bigg| \sum_{i=1}^{n}Y_{i,n,q}^{\leq} \bigg|^{\alpha+\tau} \leq 2 n  |\widetilde{Z}^{\leq}_{1,n}|^{\alpha+\tau} \sum_{j=q+1}^{\infty}C_{j}^{+}.$$
Similarly we obtain
 $$\mathrm{E} \bigg| \sum_{i=1}^{n}Y_{i,n,q}^{>} \bigg|^{\alpha-\tau} \leq 2 n  |\widetilde{Z}^{>}_{1,n}|^{\alpha-\tau} \sum_{j=q+1}^{\infty}C_{j}^{+}.$$
 Jensen's inequality, as in (\ref{e:jensen}), yields
 $$  \mathrm{E}|\widetilde{Z}^{\leq}_{1,n}|^{\alpha+\tau}  \leq 2^{\alpha+\tau +1} \mathrm{E}|Z^{\leq}_{1,n}|^{\alpha+\tau},$$
 and similarly
  $$  \mathrm{E}|\widetilde{Z}^{>}_{1,n}|^{\alpha-\tau}  \leq 2^{\alpha-\tau +1} \mathrm{E}|Z^{>}_{1,n}|^{\alpha-\tau}.$$
Collecting all these facts, from (\ref{e:prop2}) and (\ref{e:prop3}) we obtain, for large $q$,
\begin{eqnarray}\label{e:prop4}
 \nonumber \Pr \bigg( \sup_{1 \leq k \leq n} \bigg| \sum_{i=1}^{k} \sum_{j=q+1}^{\infty} C_{j}^{+} \widetilde{Z}^{\leq}_{i-j,n} \bigg| > \frac{\epsilon}{8} \bigg) & &\\[0.5em]
   & \hspace*{-28em} \leq & \hspace*{-14em} 2^{\alpha+\tau+2} \Big( \frac{\epsilon}{8} \Big)^{-(\alpha+\tau)} \Big( \frac{\alpha+\tau}{\alpha+\tau-1} \Big)^{\alpha+\tau} n  |Z^{\leq}_{1,n}|^{\alpha+\tau} \sum_{j=q+1}^{\infty}C_{j}^{+}
\end{eqnarray}
and
\begin{eqnarray}\label{e:prop5}
  \nonumber \Pr \bigg( \sup_{1 \leq k \leq n} \bigg| \sum_{i=1}^{k} \sum_{j=q+1}^{\infty} C_{j}^{+} \widetilde{Z}^{>}_{i-j,n} \bigg| > \frac{\epsilon}{8} \bigg) & &\\[0.5em]
   & \hspace*{-28em} \leq & \hspace*{-14em} 2^{\alpha-\tau+2} \Big( \frac{\epsilon}{8} \Big)^{-(\alpha-\tau)} \Big( \frac{\alpha-\tau}{\alpha-\tau-1} \Big)^{\alpha-\tau} n  |Z^{>}_{1,n}|^{\alpha-\tau} \sum_{j=q+1}^{\infty}C_{j}^{+}.
\end{eqnarray}
From (\ref{e:prop1-1}), (\ref{e:prop4}) and (\ref{e:prop5}) we see that for some positive constant $M'$ the following inequality holds for large $q$
$$  \Pr \bigg( \sup_{1 \leq k \leq n} \bigg| \sum_{i=1}^{k} \sum_{j=q+1}^{\infty} \frac{C_{j}^{+} Z_{i-j}}{a_{n}} \bigg| > \frac{\epsilon}{4} \bigg) \leq M' (n |Z^{\leq}_{1,n}|^{\alpha+\tau} + n  |Z^{>}_{1,n}|^{\alpha-\tau}) \sum_{j=q+1}^{\infty}C_{j}^{+}.$$
By Karamata's theorem $ n\mathrm{E}|Z^{\leq}_{1,n}|^{\alpha+\tau}  \to \alpha/\tau$
and
$ n\mathrm{E}|Z^{>}_{1,n}|^{\alpha-\tau} \to \alpha/\tau$, as $n \to \infty$. Therefore, since $\sum_{j=q+1}^{\infty}C_{j}^{+} \leq \sum_{j=q+1}^{\infty}|C_{j}| \to 0$ as $q \to \infty$, we have
$$ \lim_{q \to \infty} \limsup_{n \to \infty} \Pr \bigg( \sup_{1 \leq k \leq n} \bigg| \sum_{i=1}^{k} \sum_{j=q+1}^{\infty} \frac{C_{j}^{+} Z_{i-j}}{a_{n}} \bigg| > \frac{\epsilon}{4} \bigg)=0.$$
Hence we conclude
\begin{equation}\label{e:prop6}
\lim_{q \to \infty} \limsup_{n \to \infty} \Pr \bigg( \sup_{1 \leq k \leq n} \bigg| \sum_{i=1}^{k} \sum_{j=q+1}^{\infty} \frac{C_{j} Z_{i-j}}{a_{n}} \bigg| > \frac{\epsilon}{2} \bigg)=0.
\end{equation}
Now, from (\ref{e:aproksqn-prop}), (\ref{e:prop1-0}) and (\ref{e:prop6}) follows
(\ref{e:prop0}), which means that $V_{n}(\,\cdot\,) \dto C V(\,\cdot\,)$ in $D[0,1]$ with the $M_{2}$ topology.

Assume now $\alpha=1$.
Relation (\ref{e:prop2}) holds also in this case, but for (\ref{e:prop3}) we need a different argument since $\alpha-\tau < 1$, and thus we can not use the maximal inequality for demimartingales. By Markov's inequality and the first inequality in (\ref{e:ineq}) we have
\begin{eqnarray*}
 \Pr \bigg( \sup_{1 \leq k \leq n} \bigg| \sum_{i=1}^{k} \sum_{j=q+1}^{\infty} C_{j}^{+} \widetilde{Z}^{>}_{i-j,n} \bigg| > \frac{\epsilon}{8} \bigg) & \leq &\Pr \bigg( \sum_{i=1}^{n} \sum_{j=q+1}^{\infty} C_{j}^{+} |\widetilde{Z}^{>}_{i-j,n}|  > \frac{\epsilon}{8} \bigg)\\[0.5em]
 & \leq & \Big( \frac{\epsilon}{8} \Big)^{-(\alpha-\tau)}  \mathrm{E} \bigg( \sum_{i=1}^{n} \sum_{j=q+1}^{\infty} C_{j}^{+} |\widetilde{Z}^{>}_{i-j,n}| \bigg)^{\alpha-\tau},\\[0.5em]
 & \leq & \Big( \frac{\epsilon}{8} \Big)^{-(\alpha-\tau)} \mathrm{E} |\widetilde{Z}^{>}_{1,n}|^{\alpha-\tau} \sum_{i=1}^{n} \sum_{j=q+1}^{\infty} (C_{j}^{+})^{\alpha-\tau}\\[0.5em]
 & \leq & \Big( \frac{\epsilon}{8} \Big)^{-(\alpha-\tau)} n \mathrm{E} |\widetilde{Z}^{>}_{1,n}|^{\alpha-\tau} \sum_{j=q+1}^{\infty} |C_{j}|^{\alpha-\tau}.
\end{eqnarray*}
From the symmetry of $Z_{1}$, Karamata's theorem and (\ref{eq:niz}) we obtain, as $n \to \infty$,
$$ n\mathrm{E}|\widetilde{Z}^{>}_{1,n}|^{\alpha-\tau}  = n\mathrm{E}|Z^{>}_{1,n}|^{\alpha-\tau} = \frac{\mathrm{E}(|Z_{1}|^{\alpha-\tau}1_{\{ |Z_{1}| > a_{n}\}})}{a_{n}^{\alpha-\tau} \Pr(|Z_{1}| > a_{n})} \cdot n \Pr(|Z_{1}| > a_{n}) \to \frac{\alpha}{\tau}.$$
Therefore, since $\lim_{q \to \infty} \sum_{j=q+1}^{\infty} |C_{j}|^{\alpha-\tau} =0$, we have
$$ \lim_{q \to \infty} \limsup_{n \to \infty} \Pr \bigg( \sup_{1 \leq k \leq n} \bigg| \sum_{i=1}^{k} \sum_{j=q+1}^{\infty} C_{j}^{+} \widetilde{Z}^{>}_{i-j,n} \bigg| > \frac{\epsilon}{8} \bigg) =0,$$
and as in the case $\alpha \in (1,2)$ it follows that $V_{n}(\,\cdot\,) \dto C V(\,\cdot\,)$ in $D[0,1]$ with the $M_{2}$ topology. This completes the proof.
\end{proof}

\section{Appendix}

We provide a technical result used in the proof of Theorem~\ref{t:InfMA}.

\begin{lem}\label{l:tech}
 Let $Z_{1}$ be a regularly varying random variable with index $\alpha \in [1,2)$ and $(a_{n})$ a sequence of positive real numbers such that $(\ref{eq:niz})$ holds. Let $q_{n}= \lfloor n^{1/10} \rfloor,\,n \in \mathbb{N}$. Then
 $$ \lim_{n \to \infty} n q_{n}^{2} \bigg[\Pr \bigg( |Z_{1}| > \frac{a_{n}}{q_{n}} \bigg)\bigg]^{2}=0.$$
\end{lem}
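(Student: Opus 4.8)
The plan is to reduce the estimate to the Potter bounds for the slowly varying function $L$ appearing in $(\ref{e:regvar})$. First I would record two elementary consequences of $(\ref{eq:niz})$: the sequence $(a_n)$ tends to $\infty$ and is regularly varying of index $1/\alpha$, and since $q_n = \lfloor n^{1/10} \rfloor$ while $1/\alpha > 1/2 > 1/10$, it follows that $q_n = o(a_n)$, hence $a_n/q_n \to \infty$ as well. Writing $\Pr(|Z_1| > x) = x^{-\alpha} L(x)$, one then has, for every $n \ge 1$,
\[
 \frac{\Pr(|Z_1| > a_n/q_n)}{\Pr(|Z_1| > a_n)} = q_n^{\alpha}\,\frac{L(a_n/q_n)}{L(a_n)} .
\]

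Next I would apply the Potter bounds to $L$. Fixing $\delta = 1$, there is $n_0 \in \mathbb{N}$ such that for all $n \ge n_0$ both $a_n$ and $a_n/q_n$ exceed the Potter threshold, so that, using $a_n/q_n \le a_n$,
\[
 \frac{L(a_n/q_n)}{L(a_n)} \le 2\,q_n^{\delta} = 2\,q_n .
\]
Therefore $\Pr(|Z_1| > a_n/q_n) \le 2\,q_n^{\alpha+1}\,\Pr(|Z_1| > a_n)$ for $n \ge n_0$, and consequently
\[
 n\,q_n^{2}\,[\Pr(|Z_1| > a_n/q_n)]^{2}
 \le 4\,q_n^{2\alpha+4}\,n\,[\Pr(|Z_1| > a_n)]^{2}
 = 4\,q_n^{2\alpha+4}\,\frac{(\,n\,\Pr(|Z_1| > a_n)\,)^{2}}{n} .
\]

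Finally, since $n\,\Pr(|Z_1| > a_n) \to 1$ by $(\ref{eq:niz})$ and $q_n \le n^{1/10}$, the last expression is, for large $n$, at most a constant multiple of $n^{(2\alpha+4)/10 - 1} = n^{(2\alpha-6)/10}$, whose exponent is strictly negative because $\alpha < 2$; hence $n\,q_n^{2}[\Pr(|Z_1| > a_n/q_n)]^{2} \to 0$, which proves the lemma. I do not expect a genuine obstacle here: the only point needing care is the invocation of the Potter bounds, and the exponent $1/10$ in the definition of $q_n$ lies comfortably inside the admissible range $c < 1/(2+2\alpha)$ (note $1/(2+2\alpha) > 1/6$ for $\alpha < 2$), which leaves ample room to absorb the slowly varying ratio $L(a_n/q_n)/L(a_n)$ into a small power of $q_n$.
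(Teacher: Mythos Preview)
Your proof is correct. Both your argument and the paper's rest on the same core mechanism---absorbing the slowly varying factor into a small power and then checking that the resulting exponent of $n$ is negative---but your execution is considerably cleaner. You invoke the Potter bounds directly to obtain $L(a_n/q_n)/L(a_n)\le 2\,q_n$ and then use $n\Pr(|Z_1|>a_n)\to 1$ to land on $n^{(2\alpha-6)/10}\to 0$. The paper instead writes out $\Pr(|Z_1|>a_n/q_n)$ via $(\ref{e:regvar})$, introduces auxiliary quantities $M_n(u)$ and $M_n(u,v)$ built from products of slowly varying factors and negative powers of $a_n$, uses the bound $a_n^{\alpha}\ge K\,n\,L(a_n)$, and then tunes two free parameters $u=v=1/5$ to force the exponent $(2+2\alpha-u)/10-1+(u+v)/\alpha$ below zero. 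Your approach is shorter, requires no parameter juggling, and makes transparent why the exponent $1/10$ in $q_n$ suffices; the paper's route is more computational but arrives at the same conclusion.
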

\begin{proof}
By (\ref{e:regvar}) and (\ref{eq:niz}) we have
\begin{equation}\label{e:App1}
 \lim_{n \to \infty} n a_{n}^{-\alpha} L(a_{n}) =1.
\end{equation}
Since $L$ is a slowly varying function, it holds that for all $s>0$ and $t \in \mathbb{R}$, as $x \to \infty$, $x^{s}[L(x)]^{t} \to \infty$ and $x^{-s}[L(x)]^{t} \to 0$ (Bingham et al.~\cite{BiGoTe89}, Proposition 1.3.6). Hence $a_{n}^{2-\alpha}L(a_{n}) \to \infty$ as $n \to \infty$, and since by (\ref{e:App1})
$$ \lim_{n \to \infty} \frac{n}{a_{n}^{2}}\,a_{n}^{2-\alpha}L(a_{n})=1,$$
it follows that $n/a_{n}^{2} \to 0$ as $n \to \infty$. This yields
$$ \frac{a_{n}}{q_{n}} = \sqrt{\frac{a_{n}^{2}}{n}} \cdot \frac{\sqrt{n}}{q_{n}} \to \infty \qquad \textrm{as} \ n \to \infty,$$
since by the definition of the sequence $(q_{n})$, $\sqrt{n}/q_{n} \to \infty$. Thus for $u>0$, $M_{n}(u) := (a_{n}/q_{n})^{-u} [L(a_{n}/q_{n})]^{2} \to 0$ as $n \to \infty$.

From (\ref{e:regvar}) we obtain
$$ n q_{n}^{2} \bigg[\Pr \bigg( |Z_{1}| > \frac{a_{n}}{q_{n}} \bigg)\bigg]^{2} = n q_{n}^{2} \Big( \frac{a_{n}}{q_{n}} \Big)^{-2\alpha} \bigg[ L \Big( \frac{a_{n}}{q_{n}} \Big) \bigg]^{2} =
n q_{n}^{2} \Big( \frac{a_{n}}{q_{n}} \Big)^{-2\alpha +u} M_{n}(u).$$
By (\ref{e:App1}) we have
$$ a_{n}^{\alpha} \geq K nL(a_{n})$$
for some positive constant $K$ independent of $n$, and hence taking some $v>0$ such that $u+v < 2\alpha$ we obtain
\begin{eqnarray}\label{e:App2}
  \nonumber n q_{n}^{2} \bigg[\Pr \bigg( |Z_{1}| > \frac{a_{n}}{q_{n}} \bigg)\bigg]^{2} & = &
   n q_{n}^{2+2\alpha-u} (a_{n}^{\alpha})^{-2+(u+v)/\alpha} a_{n}^{-v} M_{n}(u)\\[0.4em]
   \nonumber & \leq & K^{-2+(u+v)\alpha} q_{n}^{2+2\alpha-u} n^{-1+(u+v)/\alpha} a_{n}^{-v} [L(a_{n})]^{-2+(u+v)/\alpha} M_{n}(u) \\[0.6em]
   \nonumber &=& q_{n}^{2+2\alpha-u} n^{-1+(u+v)/\alpha} M_{n}(u,v)\\[0.6em]
   & \leq & n^{(2+2\alpha -u)/10 -1+(u+v)/\alpha} M_{n}(u,v).
\end{eqnarray}
where
$$ M_{n}(u,v) := K^{-2+(u+v)\alpha}a_{n}^{-v} [L(a_{n})]^{-2+(u+v)/\alpha} M_{n}(u) \to 0, \qquad \textrm{as} \ n \to \infty.$$
Now let $u=1/5$ and $v=1/5$, and note that for this choice of $u$ and $v$ it holds that
$$ \frac{2+2\alpha-u}{10}-1+\frac{u+v}{\alpha} \leq -\frac{1}{50} < 0.$$
Therefore from (\ref{e:App2}) we obtain
 $$ \lim_{n \to \infty} n q_{n}^{2} \bigg[\Pr \bigg( |Z_{1}| > \frac{a_{n}}{q_{n}} \bigg)\bigg]^{2}=0.$$
\end{proof}

\section*{Acknowledgment}
 This work has been supported in part by Croatian Science Foundation under the project 3526 and by University of Rijeka research grants 13.14.1.2.02 and 17.15.2.2.01.


\end{document}